\numberwithin{equation}{section}
\newcommand{\n}[1]{\boldsymbol{#1}}
\newtheorem{theorem}{\bf Theorem}
\newtheorem{lemma}{\sc Lemma}
\newtheorem{proposition}{\sc Proposition}
\newtheorem{Def}{Definition}[section]
\newtheorem{hyp}{\sc Hypothesis}
\newtheorem{rem}{\sc Remark}
\renewcommand{\theenumi}{\thesubsection.\arabic{enumi}}
\title[Higher-order fem
for elliptic problems with interfaces]{Higher-order finite element methods
for elliptic problems with interfaces}
\author{Johnny Guzm\'an\textsuperscript{1}, \, Manuel S\'anchez-Uribe\textsuperscript{2}\, and \, Marcus Sarkis \textsuperscript{3} }
\address{\textsuperscript{1} Division of Applied Mathematics, Brown University, Providence, RI 02912, USA}
\email{\textsuperscript{1}johnny\_guzman@brown.edu}
\address{\textsuperscript{2} Division of Applied Mathematics, Brown University, Providence, RI 02912, USA}
\email{\textsuperscript{2}manuel\_sanchez\_uribe@brown.edu}
\address{\textsuperscript{3} Department of Mathematical Sciences at Worcester Polytechnic Institute, 100 Institute Road, Worcester, MA 01609, USA.  }
\email{\textsuperscript{2}msarkis@wpi.edu}
\thanks{Mathematics Subject Classification:  65N30, 65N15.}
\begin{document}

\date{}
\begin{center}
\begin{minipage}[b]{\textwidth}
\begin{center}
\scriptsize{ An earlier version of this paper appeared  on November 13, 2014 in http://www.brown.edu/research/projects/scientific-computing/reports/2014 }
\end{center}
\end{minipage} 
\end{center}

\begin{abstract}
\noindent
We present higher-order piecewise continuous finite element methods for solving a class of interface problems in two dimensions. The method is based on correction terms added to the right-hand side in the standard variational formulation of the problem. We prove optimal error estimates of the methods on general quasi-uniform and shape regular meshes in maximum norms. In addition, we apply the method to a Stokes interface problem, adding correction terms for the velocity and the pressure, obtaining optimal convergence results.

\vskip10truept
\noindent
{\it Keywords}: Interface problems, finite elements, pointwise estimates.
\vspace{3pt}
%{\bf Mathematics Subject Classifications}:
\end{abstract}

\maketitle

%----------------------------------------------------------------------------------------------------------------------------------------------------------------------------------------------
\section{Introduction}\label{section1}
%----------------------------------------------------------------------------------------------------------------------------------------------------------------------------------------------

In this paper we continue the work started in \cite{GSS2014} and consider higher-order piecewise continuous finite element approximations to the following interface problem: Let $\Omega\subset \mathbb{R}^2$ be a polygonal domain with an  immersed smooth, closed interface $\Gamma$ such that $\overline{\Omega}=\overline{\Omega}^{-} \cup \,\overline{\Omega}^{+}$ and $\Gamma$ encloses $\Omega^{-}$. Consider the problem
\begin{subequations}\label{Problem}
\begin{align}
% \nonumber to remove numbering (before each equation)
 -\Delta u &= f \qquad\mbox{in }\Omega, \label{Problem:a} \\
  u &= 0 \qquad \mbox{on } \partial \Omega, \label{Problem:b}\\
  \left[u\right]&=0 \qquad \mbox{on }\Gamma, \label{Problem:c} \\
  \left[D_{\n{n}} u  \right]&= \beta \qquad \mbox{on }\Gamma, \label{Problem:d}
\end{align}
\end{subequations}
where the jumps across the interface $\Gamma$ are defined as
\begin{equation*}
\left[u\right] = u^+-u^-,\quad\left[D_{\n{n}} u  \right]=D_{\n{n}^{-}} u^{-} +D_{\n{n}^{+}} u^{+}  =\nabla u^{-} \cdot \n{n}^{-}+ \nabla u^{+} \cdot \n{n}^+.
\end{equation*}
Here we denote by $u^{\pm}=u|_{\Omega^{\pm}}$ and $\n{n}^{\pm}$ is the unit outward pointing normal to $\Omega^{\pm}$.

Numerically, the problem is to find an approximate solution on meshes not aligned with the interface, that is, we allow the interface to cut elements. In this context, the finite difference methods by Peskin \cite{MR0490027,MR2009378} (i.e. \textit{immersed boundary method}) and by LeVeque and Li \cite{MR1443639} (i.e. \textit{immersed interface method}) are the most renowned. Both methods were developed for more involved
problems and for lower-order finite differences techniques. The aim of this paper
is to develop higher-order methods
based on finite element methods  and to establish a priori pointwise
error estimates. We consider the Poisson interface problem  \eqref{Problem} and the
Stokes interface problem \eqref{StokesProblem} which will be fundamental
towards developing very accurate methods with a
rigorous finite element analysis for more involved problems.

Naturally, finite element versions of the methods above have appeared; see for example \cite{MR2001876, MR2740492, MR2377272, MR2660312, MR3218337, MR2917495, MR2728820, MR2738930, MR2899249, MR3051411, MR3163976, MR2684351, MR2145387, MR2677772, MR3072968,GSS2014}. In our recent work \cite{GSS2014} we derived a piecewise linear finite element method for the above problem and proved it is second-order accurate. The attractive feature of the method in   \cite{GSS2014} is that only the right-hand side needs to be modified, which is one of the advantages also of the immersed boundary method (see \cite{MR0490027}, \cite{MR2001876}) and  immersed interface method (see \cite{MR1286215}, \cite{MR2244270}, \cite{MR2740492}) for the above problem. Moreover, the correction term added in \cite{GSS2014} is only based on the edges intersecting the interface, this is due to the fact that test functions, linear polynomials, are harmonic.  However, it seems that the approach used in \cite{GSS2014} cannot be generalized to higher order approximations. Therefore, in this paper the approach we take is based on correction functions that live on the entire triangle instead of just correcting terms on the edges. 

Motivated by fluid applications, we naturally seek methods for higher-order finite element spaces, and in this context edge based modifications are not enough. Guided by our recent work in the piecewise linear case and also by other papers; see for example \cite{MR2740492,MR2377272}, it appears to us that the construction and addition to the right-hand side of a correction function is the key to achieve a higher-order method. This also appears in the finite difference context, for example a fourth order method  was developed by Marques, Nave and Rosales \cite{MR2823566} using a correction function approach. Very recently this idea was materialized by Adjerid, Ben-Romdhane and Lin \cite{MR3218337}. They developed  higher-order methods for problems involving discontinuous coefficients (which is a more general problem). However, they use strongly the assumption that the interface is a straight line. The key is to use both, the jump condition and the PDE, to find higher-order jump conditions. Inspired by their results, we define correction functions for any polynomial of degree $k$ for curved interfaces.

The contribution of this paper is in the direction of \cite{MR3218337}, we develop a higher-order piecewise continuous finite element method for problem \eqref{Problem} with
curved interfaces. Specifically, in Section \ref{SectionFEM} we develop notation and propose a finite element method, for each polynomial of degree $k$, introducing  a correction function $w_T^u$ and adding it to the variational formulation, only modifying the right-hand side of the equation. To do this, we construct this correction function incorporating the jump conditions of the exact solution on the interface. To the best of our knowledge, this is the first family of numerical methods (with any order approximation) were one can prove optimal accuracy for the above problem without modifying the stiffness matrix. Besides the novel method, an important contribution of this paper is the techniques used to give a-priori error estimation
analysis for the Poisson and the Stokes interface problems. In particular, an interpolation estimate using the correction function is given(see Lemma \ref{wapprox}), which is crucial for the full analysis of the proposed method. In Section \ref{SectionErrorAnalysis} we prove that our method is $k+1$ (mod a logarithmic factor) order accurate  in the maximum norm, if piecewise polynomials of degree $k$ are used.

As mentioned before, in this paper we also consider a finite element approximation to a Stokes interface problem, i.e., under the same geometry assumptions for problem \eqref{Problem}, we seek for a velocity vector $\n{u}$ and pressure $p$ satisfying
\begin{subequations}\label{StokesProblem}
\begin{align}
% \nonumber to remove numbering (before each equation)
                             -\Delta \n{u}+\nabla p  &= \n{f}     \qquad\mbox{in } \Omega, \label{StokesProblem:a} \\
                                   \nabla\cdot \n{u} &= \,0         \qquad\mbox{in } \Omega, \label{StokesProblem:b} \\
                                                   \n{u} &= \,0         \qquad\mbox{on } \partial \Omega, \label{StokesProblem:c} \\
  \left[ D_{\n{n}}\n{u} -  p \n{n} \right] &= \n{\beta} \qquad\mbox{on } \Gamma, \label{StokesProblem:d}
\end{align}
\end{subequations}
%where $I\in \mathcal{M}_{2\times2}$ denotes the identity matrix and we used the same definitions introduced before for the jumps.

As is well known, time dependent versions of problem \eqref{StokesProblem} have many applications in biology, see for example \cite{Cortez}. In fact, the immersed boundary method is used primarily for solving time dependent versions of \eqref{StokesProblem} with possibly nonlinear terms; see for example Peskin and Tu \cite{MR1185651}. Subsequently, LeVeque and Li \cite{MR1443639} consider this problem applying immersed interface method techniques. Some extension and finite element versions of these approaches can be found in \cite{MR2864640,MR2046114,MR1848735}.
We consider the numerical method in this paper for problem \eqref{StokesProblem} as an important step toward defining higher-order methods for time-dependent problems with moving interfaces.  We will show in Section \ref{SectionStokes} that the same methodology used to achieve higher-order methods for Poisson problem \eqref{Problem} can be applied to Stokes problem \eqref{StokesProblem}, achieving optimal convergence results.

In Section \ref{SectionNumericalExamples} we test the methods introduced in Section \ref{SectionFEM} and \ref{SectionStokes} with some numerical examples that illustrate the properties proven in previous sections. Furthermore, we provide in the Appendix quadrature formulas for the integration over curved regions crucial to achieve higher-order results, although our analysis considers exact integration.

%\revj{Finally, we should mention that we only consider finite elements based on triangles for simplicity. However, finite elements based on quadrilaterals is straightforward}.

\vskip4mm
%----------------------------------------------------------------------------------------------------------------------------------------------------------------------------------------------
\section{Finite element methods}\label{SectionFEM}
%----------------------------------------------------------------------------------------------------------------------------------------------------------------------------------------------
In this section we present a finite element method for problem \eqref{Problem} using continuous piecewise polynomials of degree $k$. We assume that the data $\beta$ is smooth. Furthermore, we assume that $u^{\pm}  \in C^{k+1}({\overline{\Omega}}^{\pm})$ and $f|_{\Omega^{\pm}}\equiv f^{\pm} \in C^{k-1}({\overline{\Omega}}^{\pm})$.

\subsection{Notation}

Let $\mathcal{T}_h$, $0 < h <1$ be a sequence of triangulations of $\Omega$, $ \overline{\Omega} = \cup_{T\in \mathcal{T}_h} \overline{T}$, with the elements $T$ mutually disjoint. We assume the mesh is shape regular, see \cite{MR1278258}. We adopt the convention that edges, elements, regions are  open sets, and  we use
the overline symbol to refer to their closure.  Let $ h_T$ denote the diameter of the
element $T$ and $h = \max_{T} h_T$. Let $V_h$ be the space of  continuous, piecewise polynomials of degree $k$, i.e.,
\begin{equation*}\label{Vh}
  V_h = \{ v\in  C(\Omega)\cap H_0^1(\Omega) \,:\, v|_T \in \mathbb{P}^k(T) \quad \forall T\in \mathcal{T}_h\},
\end{equation*}
where $\mathbb{P}^k(T)$ is the space of polynomial of degree less than or equal to $k$ on $T$ and $H_0^1(\Omega)$ the space of functions in $H^1(\Omega)$ vanishing at $\partial \Omega$.  

Next, we define an interpolant onto $V_h$.
\begin{Def}\label{DefinitionI_h}
Given  $v^{\pm} \in C(\overline{\Omega}^\pm)$, we define locally $I_h v \in V_h$
such that
\begin{equation}\label{interpolant}
  I_h v|_T(\theta)\,\,=\,\,\left\{
    \begin{array}{ll}
      v^{-}(\theta), & \hbox{if } \theta\in \overline{\Omega}^{-}  \\
      v^{+}(\theta), & \hbox{if } \theta \in \Omega^{+},
    \end{array}
  \right.
\end{equation}
for all $\theta\in T$, the degree $k$ Lagrange points of $T$.
\end{Def}
Note that if $v$ is continuous $I_h v$ is simply the Lagrange interpolant of $v$. However, if $v$ is discontinuous then $I_h v$ interpolates values of $v$ on Lagrange points not intersecting  $\Gamma$ and for Lagrange points lying on $\Gamma$ it takes the values of $v$ coming from $\Omega^{-}$ (this is without loss of generality). The following proposition states the stability result of the interpolant $I_h$.
\begin{proposition}
Let $v^{\pm} \in C(\overline{\Omega}^\pm)$ and $I_h$ defined above, then we have
\begin{equation}
\|I_h v\|_{L^\infty(T)}\,\,\leq\,\,C \|v\|_{L^\infty(T)}\quad \forall T\in \mathcal{T}_h.
\end{equation}
\end{proposition}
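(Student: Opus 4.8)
The plan is to reduce the estimate to the standard $L^\infty$-stability of the Lagrange interpolant expressed through the nodal basis, the only novelty being that the nodal values are drawn from the two pieces $v^{\pm}$ rather than from a single continuous function. First I would fix $T\in\mathcal{T}_h$, let $\theta_1,\dots,\theta_N$ denote its degree-$k$ Lagrange points (so $N=\dim\mathbb{P}^k(T)$), and let $\phi_1,\dots,\phi_N\in\mathbb{P}^k(T)$ be the associated nodal basis functions determined by $\phi_i(\theta_j)=\delta_{ij}$. Since $I_h v|_T\in\mathbb{P}^k(T)$ is determined by its values at the Lagrange points, I can expand
\[
I_h v|_T \;=\; \sum_{i=1}^N (I_h v|_T)(\theta_i)\,\phi_i ,
\]
and the triangle inequality then yields
\[
\|I_h v\|_{L^\infty(T)} \;\leq\; \Big(\max_{1\leq i\leq N}|(I_h v|_T)(\theta_i)|\Big)\sum_{i=1}^N\|\phi_i\|_{L^\infty(T)} .
\]

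Next I would bound the nodal values. By Definition \ref{DefinitionI_h}, each $(I_h v|_T)(\theta_i)$ equals either $v^-(\theta_i)$ (when $\theta_i\in\overline{\Omega}^-$) or $v^+(\theta_i)$ (when $\theta_i\in\Omega^+$); in either case $\theta_i\in\overline{T}$, so $|(I_h v|_T)(\theta_i)|\leq\|v\|_{L^\infty(T)}$, where $\|v\|_{L^\infty(T)}$ is read as the maximum of the sup-norms of $v^-$ and $v^+$ over the respective portions of $T$. The essential observation is that, although $v$ is discontinuous across $\Gamma$, the construction only samples pointwise values, each of which is controlled by the max-norm of the appropriate piece; continuity of $v$ is never used.

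It then remains to bound the Lebesgue constant $\Lambda_T:=\sum_{i=1}^N\|\phi_i\|_{L^\infty(T)}$ uniformly. I would do this by the usual scaling argument: let $\widehat{T}$ be the reference triangle and $F_T:\widehat{T}\to T$ the affine map, which carries the Lagrange points of $\widehat{T}$ to those of $T$. Since affine maps preserve the nodal structure, $\phi_i=\widehat{\phi}_i\circ F_T^{-1}$, and because $F_T$ is a bijection onto $T$ the range of $\phi_i$ on $T$ coincides with that of $\widehat{\phi}_i$ on $\widehat{T}$, giving $\|\phi_i\|_{L^\infty(T)}=\|\widehat{\phi}_i\|_{L^\infty(\widehat{T})}$ exactly. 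Hence $\Lambda_T=\sum_{i=1}^N\|\widehat{\phi}_i\|_{L^\infty(\widehat{T})}=:C$, a constant depending only on $k$. Combining the three displays gives $\|I_h v\|_{L^\infty(T)}\leq C\|v\|_{L^\infty(T)}$.

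The argument involves no genuine obstacle; the only point needing care is the bookkeeping in the second step, namely checking that the piecewise definition of the nodal values still produces the bound $|(I_h v|_T)(\theta_i)|\leq\|v\|_{L^\infty(T)}$ regardless of which side of $\Gamma$ each Lagrange point falls on. I would also remark that, since $\|\phi_i\|_{L^\infty(T)}=\|\widehat{\phi}_i\|_{L^\infty(\widehat{T})}$ holds exactly for every nondegenerate $T$, the constant $C$ depends only on $k$ and shape regularity is not actually required for this particular estimate.
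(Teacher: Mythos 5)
The paper states this proposition without proof, so there is no authorial argument to compare against; your nodal-basis expansion with the Lebesgue constant $\Lambda_T=\sum_i\|\widehat{\phi}_i\|_{L^\infty(\widehat{T})}$ is correct and is exactly the standard argument that fills this gap, including the key bookkeeping point that each nodal value $v^{\pm}(\theta_i)$ is sampled at a point of $\overline{T}$ on the appropriate side of $\Gamma$ and is therefore bounded by $\|v\|_{L^\infty(T)}$ read as $\max\bigl(\|v^-\|_{L^\infty(T^-)},\|v^+\|_{L^\infty(T^+)}\bigr)$. Your closing remark is also accurate: since the sup norm is invariant under the affine bijection $F_T$, the constant depends only on the degree $k$, and shape regularity plays no role in this particular estimate.
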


To prove  $L^2$ based error estimates  shape regularity of the meshes suffices.  However, as is well known, some form of quasi-uniformity of the mesh is needed to prove max-norm estimates, even if there is no interface \cite{Demlow}. Since we will prove max-norm estimates of our method,  and in order to avoid unnecessary details, we will assume that the meshes are quasi-uniform.

Also, for the sake of making the presentation simpler to the reader, we make the following assumption.
\begin{hyp} \label{hypho}
We assume here that the interface $\Gamma$ intersects the boundary of each triangle $T \in \mathcal{T}_h$ at most at two points. If $\Gamma$ intersects the boundary of a triangle $T$ in exactly two points, then these two points must be on different edges $\bar{e}$ of $T$.
\end{hyp}

Next, let $\mathcal{T}_h^{\Gamma}$ denote the set of triangles $T \in \mathcal{T}_h$ such that
$T$ intersects $\Gamma$, that is,  $T \cap \Gamma \not= \emptyset$. For each $T \in \mathcal{T}_h^{\Gamma}$ let $y_T$ and $z_T$ be the two endpoints of  $T\cap \Gamma$, see figure~\ref{fig:Fig1}. Let $L_T$ denote the line segment connecting $y_T$ and $z_T$.  Let $\n{\eta}$ be the unit vector perpendicular to $L_T$ and pointing outward $T^{-}$. Let also $\n{\tau}$ be the unit vector parallel to the line  $L_T$ such that $\n{\tau}$ is the rotation of
$\n{\eta}$ by ninety degrees counterclockwise.

For each $\ell=0, 1, \ldots, k,$ let $\{\bar{x}_i^{\ell,T}\}_{i=0}^\ell$ denote the Gauss points of the segment $L_T$. For each $\bar{x}_i^{\ell,T}$, let $Q_i^{\ell,T}$ be the line perpendicular to the line segment $L_T$ that passes through the points $\bar{x}_i^{\ell,T}$. We then define $x_i^{\ell,T}=Q_i^{\ell,T} \cap \Gamma$, for $i=0,\,\ldots,\,\ell$. Note here that the choice of Gauss points is a preference of the authors, related to the quadrature rules, but not essential in the proofs. We could also use, for instance, equally spaced points.

\begin{figure}[H]
\centering
\includegraphics[scale = .4]{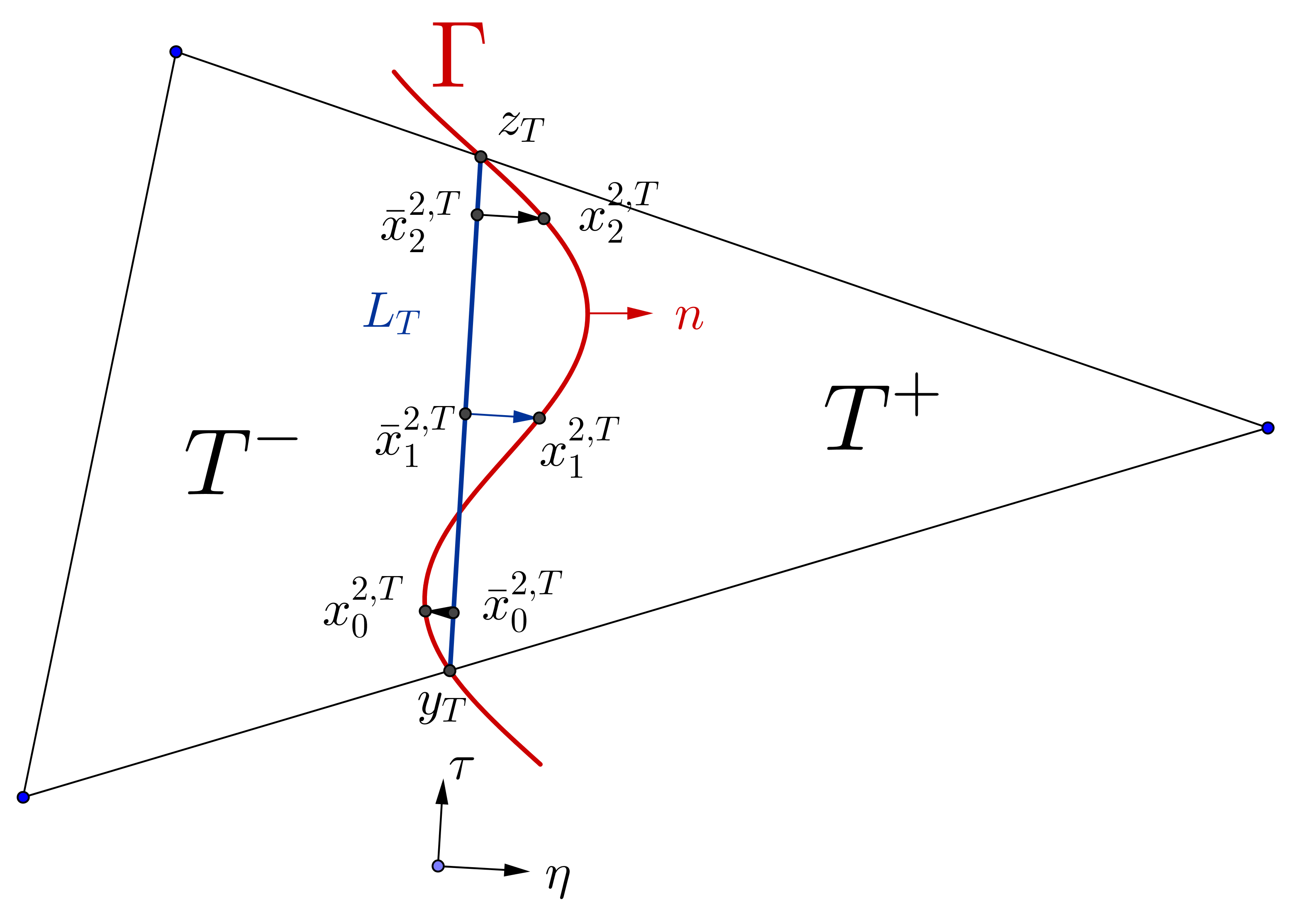} \\
\caption{Illustration of our notation, for $T\in \mathcal{T}_h^\Gamma$.}\label{fig:Fig1}
\end{figure}

Letting $T^{\pm}:= T \cap \Omega^{\pm}$, we define the following space for $T\in \mathcal{T}_h^\Gamma$
\begin{equation}\label{Sk}
S^{k}(T)=\left\{w \in L^2(T): w|_{T^{\pm}} \in \mathbb{P}^k(T^{\pm}) \right\}.
\end{equation}

\subsection{The proposed finite element method}
We now present our finite element method for problem \eqref{Problem}. Find $u_h \in V_h$, such that
\begin{equation}\label{fem}
\int_{\Omega} \nabla u_h \cdot \nabla v \, dx= \int_{\Omega} f v \, dx + \int_{\Gamma} \beta v \, ds- \sum_{T \in  \mathcal{T}_h^{\Gamma}}  \int_T \nabla w_T^u \cdot \nabla v \, dx
\end{equation}
for all $v \in V_h$, where $w_T^u$ is a correction function to be defined in Section \ref{correctionfn}.

\subsection{The correction function}\label{correctionfn}
We now show how to construct a piecewise polynomial function $w_T^u \in S^{k}(T)$ that will help to correct the right-hand side of the natural finite element method (\eqref{fem} without the correction term) to render it higher-order. We note that the functions $w_T^u$, for $T \in \mathcal{T}_h^\Gamma$,
are discontinuous across elements and satisfy jump conditions at Gauss
points on $\Gamma \cap T$.  Suppose that we give you a function $u$, then we let $w_T^u$ be the unique function in $S^k(T)$ (see Lemma \ref{lemmajump}) that satisfies

\begin{alignat}{1}
\left[ D_{\n{\eta}}^{k-\ell} w_T^u (x_i^{\ell,T})\right]=& \left[ D_{\n{\eta}}^{k-\ell} u(x_i^{\ell,T})\right] \quad \text{ for } 0 \le i \le \ell \text{ and } 0 \le  \ell \le k, \label{correction1}\\
w_T^u(\theta)= &\,0 \quad \text{ for  all degree $k$ Lagrange points } \theta \text{ of } T. \label{correction2}
\end{alignat}

We would like to stress that if $u$ is the solution to \eqref{Problem}
then we know all the jumps of $u$ in terms of the data $\beta$ and $f$
(see Section \ref{jump}) so we can construct $w_T^u$ a priori. It is
important to notice that we impose directional derivatives jump
conditions in  the ${\n{\eta}}$-direction rather than in the
${\n{n}}$-direction.  A reason for this choice is because we can show unisolvence, when $\Gamma$  is curved,
for any polynomial of degree $k$. Additionally, the construction of the correction functions can be made explicitly, that is, it
does not require solving any local linear system. Such construction permits us to develop not only
an elegant analysis for any polynomial degree $k$ but also an analysis
without imposing strong condition on how $\Gamma$ intersects an element $T$ in order to
control the ill conditioning of this matrix. We next show how
to transform jump data described in  ${\n{n}}$-direction to jump data in the ${\n{\eta}}$-direction.

\subsection{Data of jumps}\label{jump}
In this section we show that we can obtain the jumps of higher derivatives of $u$ across $\Gamma$ from data, $\beta$ and $f$. Similar ideas were used in \cite{MR1286215, MR2823566, MR2244270}.

Fix a point $x \in \Gamma$. Let $\n{n}$ be the normal vector to $\Gamma$ at $x$, and $\n{t}$ the tangent vector to $\Gamma$ at $x$.  We clearly have  $\left[D_{\n{n}} u(x)\right]= \beta$ and $\left[D_{\n{t}} u(x)\right]=0$. In fact, we also have $\left[D_{\n{t}}^\ell u(x)\right]=0$ for any $\ell$. Note that from the Poisson's equation \eqref{Problem:a},  we have
\begin{equation*}
-D_{\n{n}}^2 u- D_{\n{t}}^2 u = f \quad \mbox{in } \Omega^\pm,
\end{equation*}
and so $\left[D_{\n{n}}^2 u (x)\right]=\left[f(x)\right]$. Moreover, we note that $\left[D_{\n{t}} D_{\n{n}} u \right(x)]=D_{\n{t}} \left[ D_{\n{n}} u (x)\right]=\beta'(x)$ on $\Gamma$.

Now we proceed by induction. Suppose we have all jumps of the derivatives of order $\ell-1$ in terms of $\beta$ and $f$. Then, we will show how to get the jumps of derivatives of order $\ell$. Let $i+j=\ell$. If $i \ge 1$ then
\begin{equation*}
\left[D_{\n{t}}^i D_{\n{n}}^j u(x)\right] = D_{\n{t}} \left[D_{\n{t}}^{i-1} D_{\n{n}}^j u(x)\right]
\end{equation*}

Otherwise, using Laplace's equation we have
\begin{equation*}
\left[D_{\n{n}}^\ell u(x)\right]=\left[D_{\n{n}}^{\ell-2} f(x)\right] - D_{\n{t}} \left[D_{\n{t}} D_{\n{n}}^{\ell-2} u(x)\right].
\end{equation*}

Suppose that we would like the jump of $u$ in a different direction, say $\n{\eta}$. Then, we write $\n{\eta} = a \n{n}+b \n{t}$ obtaining
\begin{equation*}
\left[D_{\n{\eta}}^\ell u(x)\right]=\sum_{j=0}^\ell \binom{l} {j} a^{j} b^{\ell-j} \left[D_{\n{n}}^j D_{\n{t}}^{\ell-j} u(x)\right].
\end{equation*}

\vskip4mm
%----------------------------------------------------------------------------------------------------------------------------------------------------------------------------------------------
\section{Error Analysis}\label{SectionErrorAnalysis}
%----------------------------------------------------------------------------------------------------------------------------------------------------------------------------------------------
The objective of this section is to prove rigorous pointwise error estimates for the above method, which we achieve in Theorems \ref{maxgraderrorthm} and \ref{maxerrorthm}. Before doing this we need some technical results associated to
the subspace $S^k(T)$ and approximation properties of the correction function $w_T^u$.

\subsection{Properties of $S^k(T)$}
We now introduce
some crucial lemmas related to the space  $S^k(T)$.

\begin{lemma}\label{crucial}
Let $T\in \mathcal{T}_h^\Gamma$ and define $r_T=|L_T|$. For any $v \in \mathbb{P}^k(T)$, we have
\begin{equation*}
h_T^j\|D^j v\|_{L^\infty(T)} \le C \, h_T^{k} \sum_{\ell=0}^k \frac{1}{r_T^{\ell}} \max_{ 0 \le i \le \ell} |D_{\n{\eta}}^{k-\ell} v (x_i^{\ell,T})|, \quad j=0,1,
\end{equation*}
where the constant $C$ depends only on the shape regularity of $T$,  the polynomial degree $k$ and the regularity of $\Gamma$.
\end{lemma}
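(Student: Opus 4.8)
The plan is to work in the orthonormal coordinates $(s,t)$ attached to the frame $(\n{\tau},\n{\eta})$, with the $s$-axis along $L_T$ and origin at its midpoint $s_0$, so that $\partial_t=D_{\n{\eta}}$ and every $v\in\mathbb{P}^k(T)$ becomes a polynomial $v(s,t)$ of total degree at most $k$. The central observation is that for each $\ell$ the restriction $\partial_t^{k-\ell}v(\cdot,0)$ is a polynomial of degree at most $\ell$ in $s$ (the total degree drops by $k-\ell$), and it is sampled at exactly $\ell+1$ abscissae, namely the Gauss points $\bar x_i^{\ell,T}$ of the length-$r_T$ segment $L_T$. I would first record the elementary one-dimensional inverse estimate: if $p$ has degree at most $\ell$ on an interval of length $r_T$ with Gauss nodes $s_i$, then $\sup|p^{(m)}|\le C\,r_T^{-m}\max_i|p(s_i)|$ for $0\le m\le\ell$, with $C=C(k)$. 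This follows by rescaling to a reference interval, on which the $\ell+1$ nodes are unisolvent for $\mathbb{P}^\ell$ and all norms on this fixed finite-dimensional space are equivalent.

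Second, I would record the geometry. Since $\Gamma$ is smooth with bounded curvature and $r_T\le h_T<1$, for $h$ small $\Gamma$ is, near $T$, the graph $t=g(s)$ of a function vanishing at the two endpoints of $L_T$, so $\|g\|_{L^\infty}\le C\,r_T^2$ with $C$ depending only on the regularity of $\Gamma$. Consequently $D_{\n{\eta}}^{k-\ell}v(x_i^{\ell,T})=\partial_t^{k-\ell}v(\bar x_i^{\ell,T},g(\bar x_i^{\ell,T}))$, and Taylor expanding in $t$ about height $0$ yields $\partial_t^{k-\ell}v(\bar x_i^{\ell,T},0)=D_{\n{\eta}}^{k-\ell}v(x_i^{\ell,T})-E_i$, where the error $E_i=\sum_{p\ge1}\tfrac{g^p}{p!}\,\partial_t^{k-\ell+p}v(\bar x_i^{\ell,T},0)$ involves only strictly higher $t$-derivatives of $v$, i.e. levels smaller than $\ell$.

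The heart of the proof is then an induction on $\ell=0,1,\dots,k$ bounding the scaled tangential derivatives $r_T^m\sup_{L_T}|\partial_s^m\partial_t^{k-\ell}v(\cdot,0)|$ for $0\le m\le\ell$. For $\ell=0$ the quantity $\partial_t^k v$ is a constant equal to $D_{\n{\eta}}^k v(x_0^{0,T})$ and the correction vanishes. For the inductive step the error terms $E_i$ at level $\ell$ involve $\partial_t^{k-\ell+p}v$, that is levels $\ell-p<\ell$, which are already controlled; since each factor $g\sim r_T^2$ outweighs by a net power $r_T^{p}$ the single $r_T^{-1}$ lost per omitted derivative, these corrections are genuinely lower order and can be absorbed. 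Applying the one-dimensional inverse estimate to the polynomial $\partial_t^{k-\ell}v(\cdot,0)$ with nodal values $D_{\n{\eta}}^{k-\ell}v(x_i^{\ell,T})-E_i$ then closes the induction.

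Finally I would assemble the estimate by Taylor-expanding $v$ about $(s_0,0)$: every point of $T$ lies within $C\,h_T$ of it, so $\|v\|_{L^\infty(T)}$, respectively $h_T\|Dv\|_{L^\infty(T)}$, is bounded by $C\sum_{m+n\le k}h_T^{m+n}|\partial_s^m\partial_t^n v(s_0,0)|$. Writing $h_T^{m+n}|\partial_s^m\partial_t^n v|=h_T^{m+n}r_T^{-m}\,(r_T^m|\partial_s^m\partial_t^n v|)$ and using $r_T\le h_T$ together with $m+n\le k$ gives $h_T^{m+n}r_T^{-m}\le h_T^{k}r_T^{-(k-n)}=h_T^{k}r_T^{-\ell}$ with $\ell=k-n$, which is exactly the weight on the right-hand side; inserting the inductive bounds and summing over $m,n$ yields both cases $j=0,1$. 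I expect the curvature correction to be the main obstacle: the delicate point is that the functionals are evaluated on the curved interface $\Gamma$ rather than on its chord $L_T$, and one must organize the induction so that the resulting corrections, each carrying a power of $g\sim r_T^2$, are provably subordinate to the main terms and absorbable. The one-dimensional inverse estimate, the graph representation of $\Gamma$, and the Taylor assembly are comparatively routine.
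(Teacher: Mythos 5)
Your proposal is correct and follows essentially the same route as the paper's proof: reduce the bound over $T$ to the chord $L_T$ by Taylor expansion with the weights $h_T^{k}r_T^{-\ell}$ (using $r_T\le h_T$), invoke unisolvence and inverse estimates for degree-$\ell$ polynomials at the $\ell+1$ Gauss points, and run an induction over the $\n{\eta}$-derivative levels that transfers nodal values from the curve points $x_i^{\ell,T}$ to the chord points $\bar{x}_i^{\ell,T}$, absorbing the $O(r_T^2)$ (indeed $O(r_T)$ suffices) Taylor corrections into already-controlled levels. The only differences are organizational — you merge the one-dimensional inverse inequality and Gauss-point unisolvence into a single lemma, carry the tangential derivatives through the induction, and Taylor-expand once about the chord midpoint rather than via the paper's extended segment $L_T^E$ — and your bookkeeping, in which the corrections at level $\ell$ involve higher $\n{\eta}$-derivatives (levels $\ell-p<\ell$, already controlled since the base case is $\ell=0$), is the internally consistent rendering of the paper's inductive step.
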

\begin{proof}
Let $L_T^E$ be the segment with length $|L_T^E|=2 h_T$, centered at $(y_T+z_T)/2$ and aligned with $L_T$. This guaranties that any point $x \in T$ can be projected orthogonally on  $L_T^E$.
Then, using Taylor's expansion on $T$ from $L_T^E$, we can easily show that
\begin{equation*}
h_T^j\|D^j v\|_{L^\infty(T)} \le C \sum_{\ell=0}^k  h_T^ {k-\ell} \|D_{\n{\eta}}^{k-\ell} v \|_{L^\infty(L_T^E)}.
\end{equation*}

Using Taylor's theorem on $L_T^E$ from a point of  $L_T$ we get
\begin{equation*}
\|D_{\n{\eta}}^{k-\ell} v \|_{L^\infty(L_T^E)} \le C \sum_{s=0}^{\ell} h_T^s \|D_{\n{\tau}}^s D_{\n{\eta}}^{k-\ell} v \|_{L^\infty(L_T)}, \quad j=0,1.
\end{equation*}

The inverse inequality gives
\begin{equation*}
\|D_{\n{\tau}}^s D_{\n{\eta}}^{k-\ell} v \|_{L^\infty(L_T)} \le \frac{C}{r_T^s}  \|D_{\n{\eta}}^{k-\ell} v \|_{L^\infty(L_T)}.
\end{equation*}

We therefore have
\begin{align*}
h_T^j\|D^j v\|_{L^\infty(T)} \,&\le\,  C h_T^k\sum_{\ell=0}^k  h_T^ {-\ell} \sum_{s=0}^\ell \frac{h^s_T}{r_T^s} \|D_{\n{\eta}}^{k-\ell} v \|_{L^\infty(L_T)}\\
&=\,  C h_T^k\sum_{\ell=0}^k  \sum_{s=0}^\ell \frac{1}{h_T^{\ell-s}}\frac{1}{r_T^s} \|D_{\n{\eta}}^{k-\ell} v \|_{L^\infty(L_T)}\\
&\le\,  C h_T^k\sum_{\ell=0}^k  \sum_{s=0}^\ell \frac{1}{r_T^{\ell-s}}\frac{1}{r_T^s} \|D_{\n{\eta}}^{k-\ell} v \|_{L^\infty(L_T)}\\
& =\, C \, h_T^{k} \sum_{\ell=0}^k \frac{1}{r_T^{\ell}}  \|D_{\n{\eta}}^{k-\ell} v \|_{L^\infty(L_T)},
\end{align*}
where we used that $r_T \le h_T$.

To bound  the right-hand side above, we use induction on $\ell$. First, using that $D_{\n{\eta}}^{k} v$ is a constant we have
\begin{equation*}
\|D_{\n{\eta}}^{k} v \|_{L^\infty(L_T)}= |D_{\n{\eta}}^{k}v(x_0^{0,T})|.
\end{equation*}

Assume that we have proved
\begin{equation}\label{hyp}
\sum_{\ell=m+1}^k \frac{1}{r_T^{\ell}}  \|D_{\n{\eta}}^{k-\ell} v \|_{L^\infty(L_T)} \le C\, \sum_{\ell=m+1}^k \frac{1}{r_T^{\ell}} \max_{ 0 \le i \le \ell} |D_{\n{\eta}}^{k-\ell} v (x_i^{\ell,T})|,
\end{equation}
then we want to prove that
\begin{equation}\label{conclusion}
\sum_{\ell=m}^k \frac{1}{r_T^{\ell}}  \|D_{\n{\eta}}^{k-\ell} v \|_{L^\infty(L_T)} \le C\, \sum_{\ell=m}^k \frac{1}{r_T^{\ell}} \max_{ 0 \le i \le \ell} |D_{\n{\eta}}^{k-\ell} v (x_i^{\ell,T})|.
\end{equation}

Since $D_{\n{\eta}}^{k-m} v$  is a polynomial of degree $m$, we have that
\begin{equation*}
\|D_{\n{\eta}}^{k-m} v \|_{L^\infty(L_T)} \le C \, \max_{ 0 \le i \le m} |D_{\n{\eta}}^{k-m} v (\bar{x}_i^{m,T})|.
\end{equation*}

Using Taylor's theorem we have
\begin{equation*}
|D_{\n{\eta}}^{k-m} v (\bar{x}_i^{m,T})| \le |D_{\n{\eta}}^{k-m} v (x_i^{m,T})|+ \sum_{\ell=m+1}^k  d^{\ell-m} \|D_{\n{\eta}}^{k-\ell} v \|_{L^\infty(L_T)},
\end{equation*}
where $\displaystyle d\,=\,\max_{ 0 \le i \le \ell \le k} |x_{i}^{\ell,T}-\bar{x}_{i}^{\ell,T}|$.  It is clear that $d \le C r_T^2$, since we have assumed that
$\Gamma$ is smooth, that is the radius of curvature is $O(1)$, however
we note that we will only use that $d \le C r_T$. Hence, we have
\begin{equation*}
\frac{1}{r_T^m} \max_{0\le i \le m} |D_{\n{\eta}}^{k-m} v (\bar{x}_i^{m,T})| \le \frac{1}{r_T^m}  \max_{0 \le i \le m} |D_{\n{\eta}}^{k-m} v (x_i^{m,T})|+ C \sum_{\ell=m+1}^k  \frac{1}{r_T^{2m-\ell}} \|D_{\n{\eta}}^{k-\ell} v \|_{L^\infty(L_T)}.
\end{equation*}

However, $\frac{1}{r_T^{2m-\ell}} \le \frac{1}{r_T^{\ell}}$ for $\ell \ge m+1$ and so using  \eqref{hyp} we arrive at \eqref{conclusion}.
\end{proof}

The following is a fundamental lemma for the construction of $w_T^u$ and the
estimation of $h_T^j \|D^j w_T^u\|_{L^\infty(T^\pm)}$ when we choose
$c_{i,\ell} = \left[ D_{\n{\eta}}^{k-\ell} u(x_i^{\ell,T})\right]$, see equation \eqref{correction1}.

\begin{lemma}\label{lemmajump}
Given data $\{c_{i,\ell}\}$ for $0\le i \le \ell$ and $0 \le  \ell \le k$. There exists a unique function $w \in S^k(T)$, such that
\begin{alignat}{1}
\label{wsystem1}\left[ D_{\n{\eta}}^{k-\ell} w (x_i^{\ell,T})\right]=&\,\,c_{i,\ell} \quad \text{ for } 0 \le i \le \ell \text{ and } 0 \le  \ell \le k, \\
\label{wsystem2}w(\theta)=&\,\,0 \quad \text{ for  all the degree $k$ Lagrange points } \theta \text{ of } T,
\end{alignat}
with the following bound
\begin{equation}\label{lemmajump3}
h_T^j \|D^j w\|_{L^\infty(T^\pm)} \le C h_T^k \sum_{\ell=0}^k \frac{1}{r_T^{\ell}} \max_{ 0 \le i \le \ell} |c_{i,\ell}|, ~~~~\mbox{for} ~~j=0,1,
\end{equation}
where $C$ depends only on the shape regularity of $\mathcal{T}_h$, the polynomial degree $k$ and the regularity of $\Gamma$.
\end{lemma}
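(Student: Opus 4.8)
The plan is to recognize that \eqref{wsystem1}--\eqref{wsystem2} constitute a \emph{square} linear system on $S^k(T)$ and to prove existence and uniqueness by establishing injectivity, then to derive the stability bound \eqref{lemmajump3} by reducing everything to the single global polynomial $\phi = p^+ - p^-$ and invoking Lemma \ref{crucial}. First I would count degrees of freedom. Writing $w = p^+$ on $T^+$ and $w = p^-$ on $T^-$ with $p^\pm \in \mathbb{P}^k(T)$, the space $S^k(T)$ has dimension $2\binom{k+2}{2} = (k+1)(k+2)$. The jump conditions \eqref{wsystem1} number $\sum_{\ell=0}^k (\ell+1) = \frac{(k+1)(k+2)}{2}$, and the Lagrange conditions \eqref{wsystem2} number $\binom{k+2}{2} = \frac{(k+1)(k+2)}{2}$; together these match $\dim S^k(T)$. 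Hence it suffices to prove that the homogeneous system (all $c_{i,\ell}=0$ and all nodal values zero) admits only the trivial solution, after which existence for arbitrary data follows because the associated linear map on a finite-dimensional space is injective and the dimensions agree.

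For uniqueness, the key observation is that $\phi := p^+ - p^- \in \mathbb{P}^k(T)$ is a single polynomial on all of $T$, and that the jump of any $\n{\eta}$-derivative of $w$ at a point equals the corresponding derivative of $\phi$, i.e. $\left[D_{\n{\eta}}^{k-\ell} w(x_i^{\ell,T})\right] = D_{\n{\eta}}^{k-\ell}\phi(x_i^{\ell,T})$. In the homogeneous case these all vanish, so Lemma \ref{crucial} applied to $v = \phi$ gives $\|\phi\|_{L^\infty(T)}=0$, whence $p^+ = p^-$ and $w$ reduces to a single degree-$k$ polynomial on $T$. Since this polynomial vanishes at all $\binom{k+2}{2}$ Lagrange points of $T$, it is identically zero, proving $w=0$.

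For the bound, I would again start from Lemma \ref{crucial} applied to $\phi$, which after substituting $D_{\n{\eta}}^{k-\ell}\phi(x_i^{\ell,T}) = c_{i,\ell}$ yields $h_T^j\|D^j\phi\|_{L^\infty(T)} \le C h_T^k \sum_{\ell=0}^k r_T^{-\ell}\max_{0\le i\le\ell} |c_{i,\ell}|$ for $j=0,1$. It then remains to bound $p^\pm$ separately. Using the convention underlying \eqref{wsystem2} (the value at a node is taken from the side on which the node lies), at every Lagrange point $\theta$ one of $p^+(\theta),p^-(\theta)$ is prescribed to be $0$ while the other differs from it by $\pm\phi(\theta)$; thus $|p^\pm(\theta)| \le \|\phi\|_{L^\infty(T)}$ at each node. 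The equivalence of $\|\cdot\|_{L^\infty(T)}$ with the maximum of nodal values on the finite-dimensional space $\mathbb{P}^k(T)$ (with constant depending only on $k$ and shape regularity) gives $\|p^\pm\|_{L^\infty(T)} \le C\|\phi\|_{L^\infty(T)} \le C h_T^k \sum_{\ell=0}^k r_T^{-\ell}\max_{0\le i\le\ell}|c_{i,\ell}|$, and the standard inverse inequality on $T$ converts this into the $j=1$ estimate. Since $T^\pm \subset T$, this furnishes \eqref{lemmajump3}.

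The main obstacle --- indeed essentially the only delicate point --- is the careful bookkeeping of the Lagrange conditions for the discontinuous function $w$: one must fix the convention for the value at nodes lying on $\Gamma$ and track, at each node, which of $p^+, p^-$ is pinned to zero and which is controlled by $\phi$. Everything quantitative is inherited from Lemma \ref{crucial} and from norm equivalence on $\mathbb{P}^k(T)$, so the constant $C$ in \eqref{lemmajump3} depends only on the shape regularity of $\mathcal{T}_h$, the degree $k$, and the regularity of $\Gamma$, as claimed.
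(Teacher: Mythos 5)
Your proposal is correct, and it reaches Lemma \ref{lemmajump} by a genuinely different route than the paper. The paper is constructive: it sets $w = z - I_h z$ with $z = v$ on $T^-$ and $z = 0$ on $T^+$, where $v \in \mathbb{P}^k$ solves the one-sided derivative interpolation problem $D_{\n{\eta}^-}^{k-\ell} v(x_i^{\ell,T}) = c_{i,\ell}$; unisolvence of that problem is proved by writing $v(s,r) = p_k(r) + s\,p_{k-1}(r) + \cdots + s^k p_0$ in the $(\n{\tau},\n{\eta})$ coordinates and determining the $p_j$ hierarchically by one-dimensional interpolation at the Gauss points, and the bound \eqref{lemmajump3} follows from Lemma \ref{crucial} applied to $v$ together with $L^\infty$-stability of $I_h$ and an inverse estimate. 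You instead treat \eqref{wsystem1}--\eqref{wsystem2} as a square linear system (your dimension count is right) and get injectivity, hence solvability, by applying Lemma \ref{crucial} to the difference polynomial $\phi = p^+ - p^-$; note that your $\phi$ is, up to sign, exactly the paper's $v$, and your observation that at each node one of $p^\pm$ is pinned to zero while the other equals $\pm\phi(\theta)$ is precisely the mechanism behind the paper's chain $\|I_h z\|_{L^\infty(T)} \le C\|z\|_{L^\infty(T)} \le C\|v\|_{L^\infty(T)}$, so the quantitative content of the two proofs coincides (your nodal norm-equivalence constant in fact depends only on $k$ by affine invariance; shape regularity enters only through the inverse estimate). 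What each approach buys: your argument is shorter and makes uniqueness fully transparent, whereas the paper's construction leaves uniqueness essentially to the same square-system reasoning you spelled out; on the other hand, the paper's hierarchical construction is what substantiates its claim in Section \ref{correctionfn} that $w_T^u$ can be assembled explicitly without solving any local linear system --- a practical point your abstract existence argument does not deliver, since a pure injectivity proof gives no algorithm and no direct handle on the conditioning of the system one would otherwise have to solve.
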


\begin{proof}
We will construct $w$ of the form $w=z-I_h z$, where $z \in S^k(T)$. Notice that, by definition of $I_h$, $z-I_h z$ vanishes on the $(k+1)(k+2)/2$ Lagrange points of $T$, satisfying \eqref{wsystem2}. Moreover, since $I_h z$ is smooth on $T$
\begin{alignat*}{1}
\left[ D_{\n{\eta}}^{k-\ell} w (x_i^{\ell,T})\right]= \left[  D_{\n{\eta}}^{k-\ell} z (x_i^{\ell,T}) \right].
\end{alignat*}

The function $z$ will be given by
\[
z=\begin{cases} 0 &\mbox{ in } T^{+}, \\
v & \mbox{ in } T^{-}, \end{cases}
\]
%\[
%z=\begin{cases} 0 &\mbox{ on } \hat{T}^{+}, \\
%v & \mbox{ on } \hat{T}^{-}, \end{cases}
%\]
where $v$ is the unique polynomial on $\mathbb{P}^k$, such that
\begin{equation*}
D_{\n{\eta}^-}^{k-\ell} v (x_i^{\ell,T})=\,\,c_{i,\ell} \quad \text{ for } 0 \le i \le \ell \text{ and } 0 \le  \ell \le k.
\end{equation*}

The existence and uniqueness of $v$ follow from representing $v = v(s,r)$ as a polynomial of
degree $k$ in  $r$ (${\n{\tau}}$-direction) and $s$ (${\n{\eta}}$-direction), where $s=0$
represents the straight line passing through $y_T$ and $z_T$,  then by
decomposing
\[
v(s,r) = p_k(r) + s p_{k-1}(r) + s^2 p_{k-2}(r)+ \cdots +s^k p_0,
\]
where $p_\ell$, for $ 0\leq \ell\leq k$, is a polynomial of degree $\ell$ in $r$.
It is easy to see, by using interpolation at the Gauss point $\bar{x}_0^{0,T}$,
that $p_0$ exists and is unique, then by using interpolation at the Gauss points
$\bar{x}_1^{0,T}$ and $\bar{x}_1^{1,T}$ that $p_1(r)$ exists and is unique, and so on.

According to Lemma \ref{crucial}, we have the following bound
\begin{equation}\label{vinq}
h_T^j\|D^j v\|_{L^\infty(T)} \le C \,  h_T^k \sum_{\ell=0}^k \frac{1}{r_T^{\ell}} \max_{ 0 \le i \le \ell} |c_{i,\ell}|.
\end{equation}

Hence, $w$ satisfies \eqref{wsystem1}. Moreover,
\begin{equation*}
h_T^j \|D^j w\|_{L^\infty(T^\pm)} =h_T^j \|D^j (z-I_h z)\|_{L^\infty(T^\pm)} \le h_T^j \|D^j v\|_{L^\infty(T^-)}+ h_T^j \|D^j I_h z\|_{L^\infty(T)}.
\end{equation*}

Using an inverse estimate and stability of the interpolant, we have
\begin{equation*}
 h_T^j \|D^j I_h z\|_{L^\infty(T)} \le C   \|I_h z\|_{L^\infty(T)} \le  C \, \|z\|_{L^\infty(T)} \le C\, \|v\|_{L^\infty(T^-)}.
\end{equation*}

Hence,
\begin{equation*}
h_T^j \|D^j w\|_{L^\infty(T^\pm)} \le h_T^j \|D^j v\|_{L^\infty(T^-)}+  \|v\|_{L^\infty(T^-)}.
\end{equation*}

We get \eqref{lemmajump3} once we apply \eqref{vinq}.
\end{proof}

\subsection{Approximation properties of $w_T^u$.}

Since we are assuming $u^{\pm} \in C^{k+1}(\Omega^{\pm})$ and $\Gamma$
is smooth, there exist extensions $u_E^{\pm} \in C^{k+1}(\Omega)$ (see
Lemma 6.27 \cite{MR1814364}), such that the following holds
\begin{alignat*}{1}
u_E^{\pm}\,=\,&u^{\pm} \quad \text{ on } \Omega^{\pm},\qquad
\|u_E^{\pm}\|_{C^{k+1}(\Omega)} \, \le\,  C\, \|u^{\pm}\|_{C^{k+1}(\Omega^{\pm})}.
\end{alignat*}

Let $B_{2r_T} \subset T^E$ be a ball of radius $2r_T$ that encloses
$T^\Gamma$.  Here $T^E$  is the smallest patch of triangles of the mesh
$\mathcal{T}_h$ on the neighborhood of $T\in \mathcal{T}_h^\Gamma$. Let $J_T$ be the $L^2$ projection onto  polynomials of degree $k$ in $B_{2r_T}$ and consider its natural extension to all of $T^E$. Then, we can prove the following lemma.

\begin{lemma}  Let $w\in C^{k+1}(T^E)$, then we have
\begin{equation}\label{Jr}
r_T^j \|D^j(J_T(w)-w)\|_{L^\infty(B_{2 r_T})} \le C \, r_T^{k+1} \|w\|_{C^{k+1}(B_{2 r_T})} \quad  \text{ for } 0 \le j \le k,
\end{equation}
and
\begin{equation}\label{Jh}
h_T^j \|D^j(J_T(w)-w)\|_{L^\infty(T^E)} \le C \, h_T^{k+1} \|w\|_{C^{k+1}(T^E)}  \quad  \text{ for } 0 \le j \le k.
\end{equation}
\end{lemma}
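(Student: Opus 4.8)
The plan is to prove both bounds by rescaling to a fixed reference configuration: I will treat \eqref{Jr} as a standard Bramble--Hilbert estimate on the single ball $B_{2r_T}$, and reserve the real work for \eqref{Jh}, where the error is measured on the much larger patch $T^E$. For \eqref{Jr}, let $x_0$ be the center of $B_{2r_T}$ and rescale by $x = x_0 + r_T\hat x$, so that $B_{2r_T}$ becomes a fixed ball $\hat B$ of radius $2$ and $J_T$ transforms into the $L^2(\hat B)$-orthogonal projection $\hat J$ onto $\mathbb{P}^k$ (the constant Jacobian cancels in the normal equations). On $\hat B$ one has $\|D^j(\hat J\hat w - \hat w)\|_{L^\infty(\hat B)}\le C\,|\hat w|_{C^{k+1}(\hat B)}$ for $0\le j\le k$: write $\hat J\hat w - \hat w = \hat J(\hat w - \hat q) - (\hat w - \hat q)$ for an averaged Taylor polynomial $\hat q\in\mathbb{P}^k$, bound the polynomial piece $\hat J(\hat w-\hat q)$ by an inverse inequality and $L^2$-stability of $\hat J$, and bound $\hat w - \hat q$ by Bramble--Hilbert. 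Since $D^j_x = r_T^{-j}D^j_{\hat x}$ and the sup-norm is scale invariant, undoing the scaling gives exactly \eqref{Jr}.

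For \eqref{Jh} I would decompose against a single polynomial. Let $q\in\mathbb{P}^k$ be the degree-$k$ Taylor polynomial of $w$ at $x_0$; since $J_T$ reproduces $\mathbb{P}^k$ we have $J_Tq=q$, hence $J_Tw - w = J_T(w-q) + (q-w)$ on all of $T^E$. The Taylor term is controlled directly: as $\mathrm{diam}(T^E)\le C h_T$ and $w\in C^{k+1}(T^E)$, Taylor's theorem gives $\|D^j(q-w)\|_{L^\infty(T^E)}\le C\,h_T^{k+1-j}\|w\|_{C^{k+1}(T^E)}$, which is the target order after multiplying by $h_T^j$. The delicate term is the polynomial $P:=J_T(w-q)$, which is manufactured on the small ball but must be estimated on the large patch.

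For $P$ I would first use $L^\infty$-stability of $J_T$ on $B_{2r_T}$ (again via scaling and an inverse inequality on the fixed ball $\hat B$) together with the best-approximation bound $\|w-q\|_{L^\infty(B_{2r_T})}\le C\,r_T^{k+1}\|w\|_{C^{k+1}(T^E)}$ to get $\|P\|_{L^\infty(B_{2r_T})}\le C\,r_T^{k+1}\|w\|_{C^{k+1}(T^E)}$. Then I would grow this bound from $B_{2r_T}$ to $T^E$ by a polynomial (Markov-type) inverse estimate: rescaling to $\hat B$, the image of $T^E$ has diameter comparable to $\rho:=h_T/r_T\ge 1$, and for a polynomial of degree $k$ the derivative $D^jP$ has degree $\le k-j$, so its growth from $\hat B$ to the rescaled patch costs a factor $\rho^{\,k-j}$, giving $\|D^jP\|_{L^\infty(T^E)}\le C\,r_T^{-j}\rho^{\,k-j}\|P\|_{L^\infty(B_{2r_T})}$.

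The step I expect to be the main obstacle is precisely this growth: the factor $\rho^{k-j}=(h_T/r_T)^{k-j}$ is potentially large when $\Gamma$ merely clips $T$ and $r_T\ll h_T$, and a careless bound would leave surviving negative powers of $r_T$. The resolution is that the powers balance exactly: combining the two estimates yields $\|D^jP\|_{L^\infty(T^E)}\le C\,r_T^{-j}(h_T/r_T)^{k-j}\,r_T^{k+1}\|w\|_{C^{k+1}(T^E)} = C\,h_T^{k-j}\,r_T\,\|w\|_{C^{k+1}(T^E)}$, so that $h_T^j\|D^jP\|_{L^\infty(T^E)}\le C\,h_T^k r_T\|w\|_{C^{k+1}(T^E)}\le C\,h_T^{k+1}\|w\|_{C^{k+1}(T^E)}$, the last step using $r_T\le h_T$. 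Adding the Taylor and polynomial contributions gives \eqref{Jh}. The only point I would verify carefully is the uniformity in $r_T$ of the constants in the reference-ball stability and Markov estimates, which holds because $\hat B$ is a fixed domain.
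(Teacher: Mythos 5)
Your proposal is correct, but for \eqref{Jh} it takes a genuinely different route from the paper. For \eqref{Jr} the paper simply cites standard $L^2$-projection approximation theory, which is exactly what your scaling-plus-Bramble--Hilbert argument supplies, so there is nothing to compare there. For \eqref{Jh}, the paper Taylor-expands the error $J_T(w)-w$ itself from $B_{2r_T}$ out to $T^E$: since $J_T(w)\in\mathbb{P}^k$, every $(k{+}1)$-st order derivative of the error reduces to $-D^{k+1}w$, so the Taylor remainder is already $O\bigl(h_T^{k+1-j}\bigr)\|w\|_{C^{k+1}(T^E)}$, and each intermediate term $h_T^{j+\ell}\|D^{j+\ell}(J_T(w)-w)\|_{L^\infty(B_{2r_T})}$ is bounded via \eqref{Jr} by $C\,h_T^{j+\ell}r_T^{k+1-j-\ell}\|w\|_{C^{k+1}}\le C\,h_T^{k+1}\|w\|_{C^{k+1}}$ using $r_T\le h_T$ --- no $L^\infty$-stability of $J_T$, no reproduction argument, and no polynomial growth lemma are needed. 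You instead split off the Taylor polynomial $q$ of $w$, use $J_Tq=q$, stability of $J_T$ on the reference ball, and a Markov/Chebyshev growth bound costing $(h_T/r_T)^{k-j}$ to carry the polynomial $J_T(w-q)$ from the small ball to the patch. Both proofs pivot on the identical cancellation --- negative powers of $r_T$ produced on the small ball are absorbed by $r_T^{k+1}$ together with $r_T\le h_T$ --- but yours trades the paper's one-line Taylor expansion of the error for the reproduction trick plus growth estimate, and as a small dividend it yields the marginally sharper intermediate bound $h_T^j\|D^jJ_T(w-q)\|_{L^\infty(T^E)}\le C\,h_T^{k}\,r_T\,\|w\|_{C^{k+1}(T^E)}$. (Incidentally, the paper's displayed sum starts at $\ell=1$ and tacitly omits the $\ell=0$ term; your decomposition sidesteps this harmless slip entirely.) The only step you should state explicitly in a final write-up is the growth lemma you invoke: for $p\in\mathbb{P}^m$ and concentric balls of radii $2$ and $R\ge 2$ one has $\|p\|_{L^\infty(B_R)}\le C\,R^m\|p\|_{L^\infty(B_2)}$, which follows from the univariate Chebyshev estimate applied along lines through the center; with that recorded, every constant in your argument is uniform in $r_T$, as you correctly flag at the end.
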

\begin{proof}
The inequality \eqref{Jr} is a standard approximation of the $L^2$ projection. To prove \eqref{Jh} we apply Taylor's theorem to get
\begin{equation*}
h_T^j \|D^j(J_T(w)-w)\|_{L^\infty(T^E)} \le h_T^j  \sum_{\ell=1}^{k-j} h_T^\ell \|D^{j+\ell}(J_T(w)-w)\|_{L^\infty(B_{2 r_T})}+ h_T^{k+1} \|w\|_{C^{k+1}(T^E)}.
\end{equation*}
The result follows after applying \eqref{Jr} and using that $r_T \le h_T$.
\end{proof}

The following lemma establishes the approximation result for the correction function defined in Section \ref{correctionfn}.

\begin{lemma}\label{wapprox}
Suppose the solution  $u$ to problem \eqref{Problem} satisfies $u^{\pm}  \in C^{k+1}(\Omega^{\pm})$, and $w_T^u$ is the correction function defined by \eqref{wsystem1} and \eqref{wsystem2}. Then, we have
\begin{equation*}
h_T^j \|D^j(u-I_hu-w_T^u)\|_{L^\infty(T^\pm)} \le C \,  h_T^{k+1} \left(\|u_E^+\|_{C^{k+1}(T^E)} + \|u_E^-\|_{C^{k+1}(T^E)}\right),~~~\mbox{for} ~~j=0,1
\end{equation*}
and $C$ depends only on the shape regularity of $T$, the polynomial degree $k$
and the regularity of $\Gamma$.
\end{lemma}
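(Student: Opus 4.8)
The plan is to compare $u$ against a piecewise polynomial built from the local $L^2$-projection and to exploit a uniqueness identity for the correction operator. Write $\Pi \in S^k(T)$ for the piecewise polynomial equal to $J_T u_E^+$ on $T^+$ and to $J_T u_E^-$ on $T^-$. Since $\Pi$ is a polynomial of degree $k$ on each of $T^\pm$, its interpolant $I_h\Pi$ is a single global polynomial on $T$, so $\Pi - I_h\Pi \in S^k(T)$ vanishes at every degree $k$ Lagrange point of $T$ (by the definition of $I_h$) and, because $I_h\Pi$ carries no jump, satisfies $[D_{\n{\eta}}^{k-\ell}(\Pi - I_h\Pi)(x_i^{\ell,T})] = [D_{\n{\eta}}^{k-\ell}\Pi(x_i^{\ell,T})]$. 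These are exactly the two defining properties (\eqref{wsystem1}--\eqref{wsystem2}) of the correction function associated with the jump data of $\Pi$; hence by the uniqueness in Lemma \ref{lemmajump} one gets $\Pi - I_h\Pi = w_T^\Pi$, that is, $\Pi - I_h\Pi - w_T^\Pi = 0$.

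Using the linearity of $v \mapsto I_h v$ and of the map (jump data) $\mapsto w$ furnished by Lemma \ref{lemmajump}, I would then insert and subtract $\Pi$ to obtain
\begin{equation*}
u - I_h u - w_T^u = (u-\Pi) - I_h(u-\Pi) - w_T^{u-\Pi},
\end{equation*}
where $w_T^{u-\Pi}$ is the correction function driven by the jump data $[D_{\n{\eta}}^{k-\ell}(u-\Pi)(x_i^{\ell,T})]$, and the omitted term $\Pi - I_h\Pi - w_T^\Pi$ vanishes by the previous paragraph. It then remains to bound the three surviving terms on $T^\pm$.

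For the first term, on $T^\pm$ we have $u - \Pi = u_E^\pm - J_T u_E^\pm$, so \eqref{Jh} immediately gives $h_T^j\|D^j(u-\Pi)\|_{L^\infty(T^\pm)} \le C h_T^{k+1}\|u_E^\pm\|_{C^{k+1}(T^E)}$. For the second term, an inverse inequality followed by the stability of $I_h$ (the Proposition) reduces $h_T^j\|D^j I_h(u-\Pi)\|_{L^\infty(T)}$ to $C\|u-\Pi\|_{L^\infty(T)}$, which is again controlled by \eqref{Jh} applied on each of $T^\pm$.

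The third term is where the real work lies, and I expect it to be the crux. By Lemma \ref{lemmajump},
\begin{equation*}
h_T^j\|D^j w_T^{u-\Pi}\|_{L^\infty(T^\pm)} \le C h_T^k \sum_{\ell=0}^k \frac{1}{r_T^\ell} \max_{0\le i\le\ell}\bigl|[D_{\n{\eta}}^{k-\ell}(u-\Pi)(x_i^{\ell,T})]\bigr|,
\end{equation*}
and the negative powers $r_T^{-\ell}$ would be fatal if the jump data were merely $O(h_T^{k+1})$. The key is to estimate the derivative values at the Gauss points using the sharper local bound \eqref{Jr} on the small ball $B_{2r_T}$ (which contains every $x_i^{\ell,T}$, as these lie on $T\cap\Gamma$) rather than \eqref{Jh} on $T^E$: taking $j=k-\ell$ in \eqref{Jr} yields $\|D^{k-\ell}(u_E^\pm - J_T u_E^\pm)\|_{L^\infty(B_{2r_T})} \le C r_T^{\ell+1}\|u_E^\pm\|_{C^{k+1}(T^E)}$, so each jump value is $O(r_T^{\ell+1})$. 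Then $r_T^{-\ell}$ times this is $O(r_T)$, the sum over $\ell$ stays $O(r_T) \le C h_T$, and the whole term is bounded by $C h_T^{k+1}(\|u_E^+\|_{C^{k+1}(T^E)} + \|u_E^-\|_{C^{k+1}(T^E)})$. Balancing the $\ell+1$ powers of $r_T$ gained from \eqref{Jr} against the $r_T^{-\ell}$ coming from the ill-conditioning of the correction is the heart of the estimate; the identity $\Pi - I_h\Pi = w_T^\Pi$ is what makes the reduction to these standard projection errors possible in the first place. Summing the three bounds gives the claim for $j=0,1$.
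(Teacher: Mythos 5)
Your proposal is correct and takes essentially the same route as the paper's proof: your $\Pi$ is the paper's auxiliary function $v$, the identity $\Pi - I_h\Pi = w_T^{\Pi}$ (which the paper asserts via the uniqueness in Lemma \ref{lemmajump}) gives the same reduction to $(u-\Pi) - I_h(u-\Pi) - w_T^{u-\Pi}$, and you bound the three resulting terms exactly as the paper does, including the decisive step of pairing the $r_T^{-\ell}$ factors from \eqref{lemmajump3} with the $O(r_T^{\ell+1})$ local projection errors from \eqref{Jr} on $B_{2r_T}$. Your explicit uniqueness argument for $\Pi - I_h\Pi = w_T^{\Pi}$ merely spells out what the paper leaves as ``clearly.''
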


\begin{proof}
We will define $ v\in S^k(T)$ (see \eqref{Sk}) as follows
\[
v=\begin{cases} J_T(u_E^+) &\mbox{ on } T^{+}, \\
J_T(u_E^-) & \mbox{ on } T^{-}. \end{cases}
\]

Clearly $v-I_h v=w_T^v$ by Lemma \ref{lemmajump}. Hence, we have
\begin{alignat*}{1}
h_T^j \|D^j(u-I_h u-w_T^u)\|_{L^\infty(T^\pm)} \,=\,\,&h_T^j \|D^j\left((u-v) +(v-I_h v)+I_h(v-u)-w_T^u \right)\|_{L^\infty(T^\pm)}\\
                                  \,=\,\,&h_T^j \|D^j\left((u-v)-I_h(u-v)-w_T^{u-v}\right)\|_{L^\infty(T^\pm)} \\
                                  \,\le\,\,& C\, h_T^j \left(\|D^j(u-v)\|_{L^\infty(T^\pm)} + \|D^j w_T^{u-v}\|_{L^\infty(T^\pm)}\right) \\
                                  \,\,\,& + C \,\|u-v\|_{L^\infty(T)},
\end{alignat*}
where we used $w_T^{u-v} = w_T^{u} - w_T^{v}$, an inverse estimate and the stability of $I_h$ in the max-norm.

Using \eqref{Jh} we get
\begin{equation*}
h_T^j \|D^j(u-v)\|_{L^\infty(T^\pm)}+ \,\|u-v\|_{L^\infty(T^\pm)} \le C h_T^{k+1} (\|u_E^-\|_{C^{k+1}(T^E)}+ \|u_E^+\|_{C^{k+1}(T^E)}).
\end{equation*}

Estimate \eqref{lemmajump3} implies
\begin{equation*}
h_T^j \|w_T^{u-v}\|_{L^\infty(T)} \le  C  \, h_T^k \sum_{\ell=0}^k \frac{1}{r_T^{k-\ell}} \|\left[D_{\n{n}}^\ell (v-u) \right]\|_{L^\infty(T \cap \Gamma)}.
\end{equation*}

Applying \eqref{Jr} we obtain
\begin{equation*}
\sum_{\ell=0}^k \frac{1}{r^{k-\ell}} \|\left[D_{\n{n}}^\ell (v-u) \right]\|_{L^\infty(T \cap \Gamma)} \le C \, r_T (\|u_E^+\|_{C^{k+1}(B_{2r_T})} + \|u_E^-\|_{C^{k+1}(B_{2r_T})}),
\end{equation*}
which completes the proof.
\end{proof}

\subsection{Error estimates}
The next lemma will show that the correction term $w_T^u$ in the finite element method \eqref{fem} will allow us to compare $I_h u-u_h$.
\begin{lemma}\label{lemmaIuuh} Let $u^\pm\in C^{k+1}(\Omega^\pm)$ be the solution of \eqref{Problem} and $u_h$ be the solution of \eqref{fem}. Then, it holds
\begin{equation*}
 \int_{\Omega} \nabla (I_h u-u_h) \cdot \nabla v \, dx \le  C \, h^{k} \|\nabla v\|_{L^1(\Omega)} \left(\|u^+\|_{C^{k+1}(\Omega^+)}+\|u^-\|_{C^{k+1}(\Omega^-)}\right)\quad
\forall v\in V_h,
\end{equation*}
where $C$ depends only on the shape regularity of $\{\mathcal{T}_h\}_{h>0}$, the polynomial degree $k$ and the regularity of $\Gamma$.
\end{lemma}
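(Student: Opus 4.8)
The plan is to derive a consistency (Galerkin-type) identity for the exact solution, subtract the method \eqref{fem}, and then regroup the interpolation error with the correction term so that Lemma \ref{wapprox} applies on interface elements while the standard interpolation estimate handles the rest. First I would fix $v \in V_h$ and establish that the exact solution satisfies
\[
\int_{\Omega} \nabla u \cdot \nabla v \, dx = \int_{\Omega} f v \, dx + \int_{\Gamma} \beta v \, ds \qquad \forall v \in V_h.
\]
To see this I split $\int_{\Omega} \nabla u \cdot \nabla v \, dx = \int_{\Omega^-} + \int_{\Omega^+}$ and integrate by parts on each subdomain separately, which is legitimate since $u^\pm \in C^{k+1}(\overline{\Omega}^\pm) \subset H^2$ and $v \in C(\Omega) \cap H_0^1(\Omega)$. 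The bulk terms combine via $-\Delta u = f$ (see \eqref{Problem:a}) into $\int_\Omega f v\,dx$; the part of $\partial \Omega^+$ lying on $\partial\Omega$ contributes nothing because $v$ vanishes there; and the two boundary integrals over $\Gamma$ combine, using continuity of $v$, into $\int_\Gamma (D_{\n{n}^-}u^- + D_{\n{n}^+}u^+) v\, ds = \int_\Gamma \left[ D_{\n{n}} u\right] v\, ds = \int_\Gamma \beta v\, ds$ by \eqref{Problem:d}.

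Subtracting this identity from the finite element method \eqref{fem} cancels the data terms and yields
\[
\int_{\Omega} \nabla (u-u_h) \cdot \nabla v \, dx = \sum_{T \in \mathcal{T}_h^{\Gamma}} \int_T \nabla w_T^u \cdot \nabla v \, dx.
\]
I then add and subtract $I_h u$, split the interpolation error element by element, and observe that $w_T^u$ is supported only on interface elements, so the cut-element contributions regroup exactly into the quantity controlled by Lemma \ref{wapprox}:
\[
\int_{\Omega} \nabla (I_h u-u_h) \cdot \nabla v \, dx = \sum_{T \notin \mathcal{T}_h^{\Gamma}} \int_T \nabla (I_h u - u) \cdot \nabla v \, dx - \sum_{T \in \mathcal{T}_h^{\Gamma}} \int_T \nabla (u - I_h u - w_T^u) \cdot \nabla v \, dx.
\]

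For the estimate I apply Hölder's inequality on each element, $\int_T \nabla(\cdot) \cdot \nabla v\,dx \le \|\nabla(\cdot)\|_{L^\infty(T)} \|\nabla v\|_{L^1(T)}$. On uncut elements $u|_T \in C^{k+1}(T)$, so the standard interpolation bound gives $\|\nabla(I_h u - u)\|_{L^\infty(T)} \le C h_T^k \|u\|_{C^{k+1}(T)}$, and $\|u\|_{C^{k+1}(T)}$ is dominated by $\|u^+\|_{C^{k+1}(\Omega^+)} + \|u^-\|_{C^{k+1}(\Omega^-)}$ since $T$ lies in one subdomain. On cut elements Lemma \ref{wapprox} with $j=1$ gives $\|\nabla(u - I_h u - w_T^u)\|_{L^\infty(T^\pm)} \le C h_T^{k}(\|u_E^+\|_{C^{k+1}(T^E)} + \|u_E^-\|_{C^{k+1}(T^E)})$, and the global extension bound $\|u_E^\pm\|_{C^{k+1}(T^E)} \le \|u_E^\pm\|_{C^{k+1}(\Omega)} \le C\|u^\pm\|_{C^{k+1}(\Omega^\pm)}$ controls each patch norm directly by the global norm, so no finite-overlap counting of the patches $T^E$ is required. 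In every case the local factor is uniformly bounded by $C h^{k}(\|u^+\|_{C^{k+1}(\Omega^+)} + \|u^-\|_{C^{k+1}(\Omega^-)})$; pulling this constant out and using $\sum_{T} \|\nabla v\|_{L^1(T)} = \|\nabla v\|_{L^1(\Omega)}$ produces the asserted bound.

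The step I expect to be the main obstacle is the consistency identity of the first paragraph: one must carefully justify the subdomain integration by parts across the curved interface for a merely piecewise-polynomial, $H^1$ test function and correctly assemble the two $\Gamma$-integrals into the single jump term $\int_\Gamma \beta v\,ds$. Once that identity is in place, the remaining work is the bookkeeping of the regrouping and a routine application of Lemma \ref{wapprox} together with the standard interpolation estimate.
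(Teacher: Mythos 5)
Your proposal is correct and follows essentially the same route as the paper: the paper's proof is exactly your chain of identities, adding and subtracting $u$ and $I_h u$, using the exact weak formulation together with the definition of \eqref{fem} to produce the correction terms, regrouping into $\sum_{T \in \mathcal{T}_h \backslash \mathcal{T}_h^{\Gamma}} \int_{T} \nabla(I_h u-u)\cdot \nabla v \, dx$ plus $\sum_{T \in \mathcal{T}_h^{\Gamma}} \int_{T} \nabla(u-I_h u-w_T^u)\cdot\nabla v \, dx$, and concluding by Lemma \ref{wapprox} and the smoothness of $u$ on uncut elements. The only difference is one of detail: you spell out the consistency identity $\int_{\Omega} \nabla u \cdot \nabla v \, dx = \int_{\Omega} f v \, dx + \int_{\Gamma} \beta v \, ds$ (subdomain integration by parts plus \eqref{Problem:d}) and the final H\"older/summation bookkeeping, both of which the paper uses implicitly without comment.
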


\begin{proof}
\begin{alignat*}{1}
\int_{\Omega} \nabla (I_h u-u_h) \cdot \nabla v \, dx=& \int_{\Omega} \nabla (I_h u-u) \cdot \nabla v \, dx+  \int_{\Omega} \nabla u \cdot \nabla v \, dx-\int_{\Omega} \nabla  u_h \cdot \nabla v \, dx \\
=&  \int_{\Omega} \nabla (I_h u-u) \cdot \nabla v \, dx + \sum_{T \in  \mathcal{T}_h^{\Gamma}}  \int_T \nabla w_T^u \cdot \nabla v \, dx  \\
=& \sum_{T \in \mathcal{T}_h \backslash  \mathcal{T}_h^{\Gamma}} \int_{T} \nabla(I_h u-u)\cdot \nabla v \, dx +\sum_{T \in \mathcal{T}_h^{\Gamma}} \int_{T} \nabla(u-I_h u-w_T^u)\cdot\nabla v \, dx
\end{alignat*}

The result now easily follows from Lemma \ref{wapprox} and the fact that $u$ is smooth on $T \in \mathcal{T}_h \backslash  \mathcal{T}_h^{\Gamma}$.
\end{proof}

From the above lemma we can easily prove an optimal estimate in the $H^1$ semi-norm:
\begin{equation*}
\|\nabla(I_h u-u_h)\|_{L^2(\Omega)} \le C \, h^{k} \left(\|u^+\|_{C^{k+1}(\Omega^+)}+\|u^-\|_{C^{k+1}(\Omega^-)}\right).
\end{equation*}

However, are goal is to prove estimates in the maximum-norm as our next result states. A slightly sub-optimal (off by a log factor) can be proved if we use the above lemma directly. In order to prove the optimal estimate, we will give a more involved argument.
\begin{theorem}\label{maxgraderrorthm}
Suppose that $\Omega$ is convex. Let $u^\pm\in C^{k+1}(\Omega^\pm)$ be the solution of \eqref{Problem} and $u_h$ be the solution of \eqref{fem}, then
\begin{equation*}
\|\nabla(I_h u-u_h)\|_{L^\infty(\Omega)} \le C \, h^{k} (\|u^+\|_{C^{k+1}(\Omega^+)}+\|u^-\|_{C^{k+1}(\Omega^-)}),
\end{equation*}
where $C$ depends only on the shape regularity of $\{\mathcal{T}_h\}_{h>0}$,
the polynomial degree $k$ and the regularity of $\Gamma$.
\end{theorem}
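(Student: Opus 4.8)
The plan is to prove the pointwise gradient bound by a weighted-norm duality argument built on a regularized Green's function, splitting the consistency error of the method into a part supported on the smooth elements and a part supported on the interface band, and exploiting the fact that the band has codimension one to remove the logarithmic factor that the direct use of Lemma \ref{lemmaIuuh} produces.

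First I would reduce the gradient maximum to a single directional derivative at a point. Set $e_h=I_hu-u_h\in V_h$ and choose $z_0$ in the interior of an element $T_0$, together with a unit direction, so that the directional derivative $\partial e_h(z_0)$ is comparable to $\|\nabla(I_hu-u_h)\|_{L^\infty(\Omega)}$. Introduce a smoothed Dirac mass $\delta^h$ supported in $T_0$ and reproducing polynomials of degree $k-1$, so that $\partial e_h(z_0)=\int_{T_0}\delta^h\,\partial e_h\,dx$ holds exactly, since $\partial e_h\in\mathbb{P}^{k-1}(T_0)$. Define $G\in H_0^1(\Omega)$ by $\int_\Omega\nabla G\cdot\nabla v\,dx=\int_\Omega\delta^h\,\partial v\,dx$ for all $v\in H_0^1(\Omega)$, and let $G_h\in V_h$ be its Ritz projection. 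Convexity of $\Omega$ furnishes the full elliptic regularity of $G$ needed for the weighted a priori estimates up to $\partial\Omega$. Taking $v=e_h$ gives $\partial e_h(z_0)=\int_\Omega\nabla G\cdot\nabla e_h\,dx$; since $\int_\Omega\nabla(G-G_h)\cdot\nabla v\,dx=0$ for all $v\in V_h$ and $e_h\in V_h$, the term involving $G-G_h$ vanishes, leaving $\partial e_h(z_0)=\int_\Omega\nabla G_h\cdot\nabla e_h\,dx$. Inserting the consistency identity from the proof of Lemma \ref{lemmaIuuh} with test function $G_h$ yields
\[
\partial e_h(z_0)=\sum_{T\in\mathcal{T}_h\setminus\mathcal{T}_h^\Gamma}\int_T\nabla(I_hu-u)\cdot\nabla G_h\,dx+\sum_{T\in\mathcal{T}_h^\Gamma}\int_T\nabla(u-I_hu-w_T^u)\cdot\nabla G_h\,dx .
\]

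For the interface sum, which is where the new ingredient enters, Lemma \ref{wapprox} with $j=1$ gives the local factor $\|\nabla(u-I_hu-w_T^u)\|_{L^\infty(T)}\le Ch^k\bigl(\|u_E^+\|_{C^{k+1}(T^E)}+\|u_E^-\|_{C^{k+1}(T^E)}\bigr)$, so this sum is at most $Ch^k(\cdots)\,\|\nabla G_h\|_{L^1(\Omega^\Gamma)}$, where $\Omega^\Gamma=\bigcup_{T\in\mathcal{T}_h^\Gamma}T$ is a band of transverse width $O(h)$ about $\Gamma$. With the weight $\sigma(x)=|x-z_0|+h$ and a power $\alpha\in(\tfrac12,1)$, Cauchy--Schwarz gives $\|\nabla G_h\|_{L^1(\Omega^\Gamma)}\le\|\sigma^\alpha\nabla G_h\|_{L^2(\Omega)}\,\|\sigma^{-\alpha}\|_{L^2(\Omega^\Gamma)}$. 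The weighted stability of the discrete Green's function on a convex domain gives $\|\sigma^\alpha\nabla G_h\|_{L^2(\Omega)}\le Ch^{\alpha-1}$ (the mass concentrating in the $O(h)$-ball about $z_0$), while the codimension-one band yields $\|\sigma^{-\alpha}\|_{L^2(\Omega^\Gamma)}\le Ch^{1-\alpha}$; the product is $O(1)$, with no logarithm. This is precisely the mechanism unavailable when one integrates $\nabla G_h$ over the whole two-dimensional domain $\Omega$, and it is where the crude application of Lemma \ref{lemmaIuuh} loses the log.

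The remaining sum over the smooth elements is the gradient consistency of the Ritz error of a smooth integrand, and I would bound it by the classical weighted-norm (Schatz--Wahlbin) technique: insert the interpolant $I_hG$, use super-approximation together with the weighted estimates for $\nabla G$, $\nabla G_h$ and $D^2G$ valid on a convex domain, and the interpolation bound $\|\nabla(I_hu-u)\|_{L^\infty(T)}\le Ch^k\|u\|_{C^{k+1}(T)}$, to obtain an $O(h^k)$ contribution free of logarithms. Combining the two estimates and taking the supremum over $z_0$ gives the theorem. I expect the principal obstacle to be the derivation of the weighted a priori estimates for $G$ and $G_h$ up to the boundary---the step that genuinely uses convexity---and the super-approximation cancellation on the smooth elements; by contrast the interface-band estimate, although the essential new ingredient, is short once those weighted bounds for $G_h$ are in hand.
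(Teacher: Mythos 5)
Your overall frame---the regularized derivative Green's function $g$ with $-\Delta g=\partial_{x_i}\delta_h$, Galerkin orthogonality to pass from $\nabla g$ to $\nabla g_h$, and the splitting of the consistency identity of Lemma \ref{lemmaIuuh} into a smooth-element sum $J_1$ and an interface sum $J_2$---is exactly the paper's. Your treatment of $J_2$ is also sound, and is a legitimate alternative mechanism: the paper instead combines the log-free Rannacher--Scott bound $\|\nabla(g-g_h)\|_{L^1(\Omega)}\le C$ of \eqref{gmgh} with the local band estimate $\|\nabla g\|_{L^1(S^\Gamma)}\le C$ of \eqref{glocal} (imported from \cite{GSS2014}) to conclude \eqref{ghlocal}, whereas you derive $\|\nabla g_h\|_{L^1(S^\Gamma)}\le C$ directly by weighted Cauchy--Schwarz with $\sigma^{\alpha}$, $\alpha\in(\tfrac12,1)$; both exploit the same codimension-one structure of the band, and your version merely shifts the technical burden onto discrete weighted stability of $g_h$.

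The genuine gap is in $J_1$. You bound the interface-free sum using only the gradient-level estimate $\|\nabla(I_hu-u)\|_{L^\infty(T)}\le Ch^k$ and appeal to ``super-approximation together with weighted estimates,'' but no estimate of Hölder type can succeed here: on $R_h=\cup_{T\in\mathcal{T}_h\setminus\mathcal{T}_h^\Gamma}T$ one genuinely has $\int_{R_h}|\nabla g|\,dx\sim\log(1/h)$ (since $|\nabla g|\sim|x-z|^{-2}$ and $R_h$ fills out essentially all of $\Omega$), so $Ch^k\int_{R_h}|\nabla g_h|$ is irreducibly $O(h^k\log(1/h))$ and the logarithm must be removed by cancellation, i.e.\ by trading $\|\nabla(I_hu-u)\|_{L^\infty}=O(h^k)$ for $\|I_hu-u\|_{L^\infty(R_h)}=O(h^{k+1})$ through an integration by parts. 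Moreover the classical full-domain Schatz--Wahlbin machinery you invoke does not apply: the integration is restricted to $R_h$, and $(I_hu-u)\chi_{R_h}$ is not in $H^1(\Omega)$ because $I_hu-u$ has no smallness across the cut $\partial R_h\setminus\partial\Omega$ (that is precisely why the correction $w_T^u$ lives on the interface triangles). The paper's proof resolves this by writing $J_1=\int_{R_h}\nabla(I_hu-u)\cdot\nabla(g_h-g)+\int_{R_h}\nabla(I_hu-u)\cdot\nabla g$, controlling the first term with \eqref{gmgh}, and integrating the second by parts over $R_h$; this produces, besides the harmless volume term $\int_{R_h}(I_hu-u)\,\partial_{x_i}\delta_h\,dx$, the internal boundary term
\begin{equation*}
\int_{\partial R_h\setminus\partial\Omega}(I_hu-u)\,D_{\n{n}}g\,ds,
\end{equation*}
whose control requires the dedicated estimate $\|D_{\n{n}}g\|_{L^1(\partial R_h\setminus\partial\Omega)}\le C/h$ of \eqref{Dngbound}, proved by a dyadic decomposition in Appendix \ref{AppendixDngbound}; convexity of $\Omega$ is used exactly here, to guarantee $\nabla g$ is continuous so the integration by parts is legitimate (not, as you suggest, only for weighted regularity up to $\partial\Omega$). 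Your proposal contains no substitute for this boundary-term mechanism, so as written the $J_1$ estimate does not close and the argument would only recover the sub-optimal $O(h^k\log(1/h))$ bound.
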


\begin{proof}
Let $e_h=I_h u-u_h$ and suppose that the maximum of $|\partial_{x_i} e_h|$ occurs at $z \in \Omega$ (for some fixed $1 \le i \le 2$). Suppose $z \in T$, for some $T \in \mathcal{T}_h$.  Consider now the regularized Dirac delta function $\delta_h^{{z}} = \delta_h  \in C^1_0(T_{{z}})$ (see \cite{MR1278258}), which satisfies

\begin{equation}\label{rz}
  r({z}) = (r,\delta_h)_{T_{{z}}},\qquad \forall r \in P^k(T_{{z}}),
\end{equation}
and has the following property

\begin{equation}\label{deltaestimates}
  \|\delta_h\|_{W^{r,q}(T_{{z}})}\,\,\leq\,\,C h^{-r-2(1-1/q)},\qquad 1\leq q\leq\infty,\,\,r=0,1.
\end{equation}

For each $i=1,2$, define the approximate Green's function $g \in H_0^1(\Omega)$, which solves the following equation:

\begin{subequations}\label{Greensfp}
\begin{align}
% \nonumber to remove numbering (before each equation)
 -\Delta g &= \partial_{x_i}\delta_h\qquad\mbox{in }\Omega, \label{Greensfp:a} \\
  g&= 0\qquad \quad\,\,\,\mbox{on }\partial\Omega. \label{Greensfp:b}
\end{align}
\end{subequations}

We also consider its finite element approximation $g_h \in V_h$ that satisfies
\begin{equation}\label{approxgreen}
\int_{\Omega} \nabla g_{h} \cdot \nabla v\, d{x}= \int_{\Omega} v \,\partial_{x_i} \delta_h \,d{x}  \quad \text{ for all } v \in V_h.
\end{equation}

From the work of Scott and Rannacher \cite{MR645661} we  have
\begin{equation}\label{gmgh}
\|\nabla(g-g_h)\|_{L^1(\Omega)} \le C.
\end{equation}

Moreover, using a dyadic decomposition one can show
\begin{equation}\label{gglobal}
\|\nabla g\|_{L^1(\Omega)} \le C \log(1/h).
\end{equation}

A log free estimate holds if we consider a smaller domain, i.e.
\begin{equation}\label{glocal}
\|\nabla g\|_{L^1(S^\Gamma)} \le C,
\end{equation}
where $S^{\Gamma}=\{x \in \Omega: dist(x, \Gamma) \le \kappa h\} $ for some fixed constant $\kappa$; see for instance \cite{GSS2014}.
Hence, combining \eqref{gmgh} and \eqref{glocal} we have
\begin{equation}\label{ghlocal}
\|\nabla g_h\|_{L^1(S^\Gamma)} \le C.
\end{equation}

We start by using the definition of $\delta_h$ and problem \eqref{approxgreen}
\begin{equation*}
\| \partial_{x_i} e_h\|_{L^\infty(\Omega)}=|\partial_{x_i} e_h(z)|=|\int_{\Omega} \delta_h \partial_{x_i} e_h \, dx|=|\int_{\Omega} \partial_{x_i} \delta_h\,  e_h \, dx|.
\end{equation*}

Then, we see that
\begin{equation*}
\| \partial_{x_i} e_h\|_{L^\infty(\Omega)}= |\int_{\Omega} \nabla g \cdot \nabla e_h \, dx|= |\int_{\Omega} \nabla g_h \cdot \nabla e_h \, dx|.
\end{equation*}

If we follow the proof of Lemma \ref{lemmaIuuh} we see that
\begin{equation*}
\int_{\Omega} \nabla g_h \cdot \nabla e_h \, dx= J_1+J_2,
\end{equation*}
where
\begin{equation*}
 J_1\,=\,\sum_{T \in \mathcal{T}_h \backslash  \mathcal{T}_h^{\Gamma}} \int_{T} \nabla(I_h u-u) \cdot\nabla g_h \, dx,
\end{equation*}
and
\begin{equation*}
J_2\,=\,\sum_{T \in \mathcal{T}_h^{\Gamma}} \int_{T} \nabla(I_h u+w_T^u-u)\cdot \nabla g_h \, dx.
\end{equation*}

Applying Cauchy-Schwarz inequality to $J_2$ we get
\begin{equation*}
 J_2 \,\le\, C \,\|\nabla g_h\|_{L^1(S^\Gamma)} \max_{ T \in \mathcal{T}_h^{\Gamma}}  \|\nabla(I_h u +w_T^u-u)\|_{L^\infty(T)}.
\end{equation*}

Moreover, using \eqref{ghlocal} and Lemma \ref{wapprox} we have
\begin{equation*}
 J_2 \le C  \, h^{k} (\|u^+\|_{C^{k+1}(\Omega^+)}+\|u^-\|_{C^{k+1}(\Omega^-)}).
\end{equation*}

To give an estimate for $J_1$ we define $R_h = \cup_{T \in \mathcal{T}_h \backslash  \mathcal{T}_h^{\Gamma}} T$. Now, adding and subtracting $\nabla g$, we obtain
\begin{equation*}
J_1=\sum_{T \in \mathcal{T}_h \backslash  \mathcal{T}_h^{\Gamma}} \int_{T} \nabla (I_h u-u) \cdot\nabla  g_h \, dx= \int_{R_h} \left( \nabla (I_h u-u) \cdot\nabla  (g_h-g) +  \nabla (I_h u-u) \cdot\nabla  g \, \right)dx .
\end{equation*}

Using \eqref{gmgh}, we have
\begin{equation*}
\int_{R_h} \nabla (I_h u-u)\cdot \nabla  (g_h-g) \, dx \,\le\, C \, h^{k} (\|u^+\|_{C^{k+1}(\Omega^+)}+\|u^-\|_{C^{k+1}(\Omega^-)}).
\end{equation*}

For the remaining term we integrate by parts to get
\begin{equation*}
\int_{R_h}  \nabla (I_h u-u)\cdot \nabla  g \, dx= \int_{R_h}  (I_h u-u) \partial_{x_i} \delta_h \, dx+ \int_{\partial R_h \backslash \partial \Omega} (I_h u-u) D_{\n{n}}  g \, ds.
\end{equation*}

Here we used that, since $\Omega$ is convex, $\nabla g$ is continuous and so integration by parts makes sense.

Clearly we have
\begin{equation*}
\int_{R_h}  (I_h u-u)  \partial_{x_i} \delta_h  \, dx \le C h^{k} \left(\|u^+\|_{C^{k+1}(\Omega^+)}+\|u^-\|_{C^{k+1}(\Omega^-)}\right).
\end{equation*}

Finally, we have
\begin{alignat*}{1}
\int_{\partial R_h \backslash \partial \Omega} (I_h u-u) D_{\n{n}}  g \, ds \,\le\, & C \, \|I_h u-u\|_{L^\infty(R_h)} \| D_{\n{n}}  g\|_{L^1(\partial R_h \backslash \partial \Omega)} \\
 \,\le\, &  C h^{k+1}  \| D_{\n{n}}  g\|_{L^1(\partial R_h \backslash \partial \Omega)}  \left(\|u^+\|_{C^{k+1}(\Omega^+)}+\|u^-\|_{C^{k+1}(\Omega^-)}\right).
\end{alignat*}

In Appendix \ref{AppendixDngbound} we prove the bound
\begin{equation}\label{Dngbound}
\| D_{\n{n}}  g\|_{L^1(\partial R_h \backslash \partial \Omega)}  \le \frac{C}{h},
\end{equation}
which will then show that
\begin{equation*}
J_1 \le  C h^{k} (\|u^+\|_{C^{k+1}(\Omega^+)}+\|u^-\|_{C^{k+1}(\Omega^-)}),
\end{equation*}
and will complete the proof.
\end{proof}

Next, we will prove an estimate for the error in the maximum norm. In the case there is no interface a logarithmic factor is not present for $k \ge 2$ (see \cite{MR1278258}), however, we do not see how to remove this factor in our setting.
\begin{theorem}\label{maxerrorthm}
Suppose that $\Omega$ is convex. Let $u^\pm\in C^{k+1}(\Omega^\pm)$ be the solution of \eqref{Problem} and $u_h$ be the solution of \eqref{fem}, then
\begin{equation}
\|I_h u - u_h\|_{L^\infty(\Omega)} \leq C h^{k+1} \log(1/h) \left(\|u^+\|_{C^{k+1}(\Omega^+)}+\|u^-\|_{C^{k+1}(\Omega^-)}\right).
\end{equation}
where $C$ depends only on the shape regularity of $\{\mathcal{T}_h\}_{h>0}$, the polynomial degree $k$ and the regularity of $\Gamma$.
\end{theorem}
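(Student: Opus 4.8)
The plan is to adapt the duality argument from the proof of Theorem \ref{maxgraderrorthm}, replacing the gradient of the regularized delta by the regularized delta itself, so that the relevant Green's function controls the value of the error rather than its gradient. Set $e_h = I_h u - u_h$ and let $z \in T \subset \Omega$ be a point where $|e_h|$ attains its maximum. Let $\delta_h \in C_0^1(T_z)$ be the regularized Dirac mass of \eqref{rz}--\eqref{deltaestimates}. Since $e_h|_{T_z} \in \mathbb{P}^k(T_z)$, the reproduction property \eqref{rz} gives $\|e_h\|_{L^\infty(\Omega)} = |e_h(z)| = |\int_\Omega e_h \delta_h\,dx|$. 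I would then introduce the Green's function $g \in H_0^1(\Omega)$ solving $-\Delta g = \delta_h$ in $\Omega$, $g = 0$ on $\partial\Omega$, together with its finite element approximation $g_h \in V_h$ determined by $\int_\Omega \nabla g_h \cdot \nabla v\,dx = \int_\Omega v\,\delta_h\,dx$ for all $v \in V_h$. Because $e_h \in V_h$ and, $\Omega$ being convex, $g$ is $C^1$, integration by parts and the discrete equation give the two identities $\|e_h\|_{L^\infty(\Omega)} = |\int_\Omega \nabla e_h \cdot \nabla g\,dx| = |\int_\Omega \nabla e_h \cdot \nabla g_h\,dx|$. The key point is that, relative to the gradient Green's function of Theorem \ref{maxgraderrorthm}, every estimate \eqref{gmgh}--\eqref{Dngbound} gains one power of $h$ while possibly acquiring a logarithm: in particular $\|\delta_h\|_{L^1(T_z)} \le C$ from \eqref{deltaestimates}, a local bound $\|\nabla g_h\|_{L^1(S^\Gamma)} \le C\,h\log(1/h)$, and a boundary flux bound $\|D_{\n{n}} g\|_{L^1(\partial R_h \setminus \partial\Omega)} \le C\log(1/h)$ proved in the appendix as in \eqref{Dngbound}.

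Following the decomposition used for Lemma \ref{lemmaIuuh} with $v = g_h$, I would write $\|e_h\|_{L^\infty(\Omega)} = |J_1 + J_2|$, where $J_1 = \sum_{T \in \mathcal{T}_h \setminus \mathcal{T}_h^\Gamma} \int_T \nabla(I_h u - u)\cdot \nabla g_h\,dx$ and $J_2 = \sum_{T \in \mathcal{T}_h^\Gamma} \int_T \nabla(u - I_h u - w_T^u)\cdot\nabla g_h\,dx$. For $J_2$, Hölder's inequality together with Lemma \ref{wapprox} at $j=1$ (which yields $\|\nabla(u - I_h u - w_T^u)\|_{L^\infty(T)} \le C h^k(\|u_E^+\|_{C^{k+1}} + \|u_E^-\|_{C^{k+1}})$) and the local bound $\|\nabla g_h\|_{L^1(S^\Gamma)} \le C\,h\log(1/h)$ gives $|J_2| \le C h^{k+1}\log(1/h)(\cdots)$, using that the correction is supported on $\mathcal{T}_h^\Gamma \subset S^\Gamma$. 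This is exactly where the extra power of $h$ over Theorem \ref{maxgraderrorthm} arises and where the logarithm enters for every $k$.

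For $J_1$, on $R_h = \cup_{T \notin \mathcal{T}_h^\Gamma} T$ the function $u$ is smooth, so $\|I_h u - u\|_{L^\infty(R_h)} \le C h^{k+1}(\cdots)$. Adding and subtracting $\nabla g$ and integrating by parts, the volume term $\int_{R_h}(I_h u - u)\delta_h\,dx$ is bounded by $C h^{k+1}$ via $\|\delta_h\|_{L^1} \le C$, and the interface-boundary term $\int_{\partial R_h \setminus \partial\Omega}(I_h u - u)\,D_{\n{n}} g\,ds$ by $C h^{k+1}\log(1/h)$ via the flux bound; the boundary contribution on $\partial\Omega$ vanishes since $I_h u - u = 0$ there.

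The main obstacle is the remaining smooth-region term $\int_{R_h}\nabla(I_h u - u)\cdot\nabla(g - g_h)\,dx$. A crude estimate $\le \|\nabla(I_h u - u)\|_{L^\infty(R_h)}\,\|\nabla(g - g_h)\|_{L^1(\Omega)} \le C h^k \cdot C\log(1/h)$ loses exactly one power of $h$, because for the value Green's function the $L^1$ gradient error is only $O(\log(1/h))$, with no gain of $h$. Recovering the optimal order here is the heart of the matter: since $u$ is smooth on $R_h$, this integral coincides with the duality term of standard pointwise finite element analysis, and I would control it by importing the weighted-norm and local energy estimates of Schatz--Wahlbin type for $g$ and $g_h$ (as developed in \cite{MR1278258}), which supply the missing factor $h$ up to the logarithm. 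Assembling the bounds for $J_1$ and $J_2$ then yields $\|e_h\|_{L^\infty(\Omega)} \le C h^{k+1}\log(1/h)\left(\|u^+\|_{C^{k+1}(\Omega^+)} + \|u^-\|_{C^{k+1}(\Omega^-)}\right)$, as claimed.
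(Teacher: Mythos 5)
Your proposal is correct in outline and shares the paper's skeleton: the regularized delta $\delta_h$, the value Green's function $\tilde g$ solving $-\Delta \tilde g=\delta_h$, its Galerkin approximation $\tilde g_h$, and the splitting into $J_1$ (smooth region) and $J_2$ (interface triangles), with your $J_2$ handled exactly as in the paper, via Lemma \ref{wapprox} with $j=1$ and $\|\nabla\tilde g_h\|_{L^1(S^\Gamma)}\le C h\log(1/h)$. Where you genuinely diverge is $J_1$. The paper does \emph{not} add and subtract $\nabla\tilde g$ and integrate by parts against the continuous Green's function as in Theorem \ref{maxgraderrorthm}; instead it inserts the lowest-order Raviart--Thomas projection and writes $J_1=J_1(\nabla\tilde g_h-\Pi\nabla\tilde g)+J_1(\Pi\nabla\tilde g)$, bounding the first piece by $\|\nabla\tilde g_h-\Pi\nabla\tilde g\|_{L^1(\Omega)}\le Ch\log(1/h)$ from the appendix of \cite{GSS2014}, and exploiting that $\Pi\nabla\tilde g$ is piecewise constant with continuous normal components to integrate by parts exactly, leaving only edge integrals over $\mathcal{E}_h^{\Gamma,\partial}$, which are controlled by $\|\Pi\nabla\tilde g\|_{L^1(S^\Gamma)}\le Ch\log(1/h)$. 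The RT route buys two things your route must supply by hand: it never needs a flux bound for $\tilde g$ on $\partial R_h$, and it never produces the term $\int_{R_h}\nabla(I_hu-u)\cdot\nabla(\tilde g-\tilde g_h)$. Conversely, your route is structurally parallel to Theorem \ref{maxgraderrorthm} and avoids introducing $\Pi$ at all, which is aesthetically cleaner if the two auxiliary estimates are available.

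Two caveats on those auxiliary estimates. First, your flux bound $\|D_{\n{n}}\tilde g\|_{L^1(\partial R_h\setminus\partial\Omega)}\le C\log(1/h)$ is plausible but is not ``proved in the appendix'': Appendix \ref{AppendixDngbound} establishes \eqref{Dngbound} for the \emph{derivative} Green's function with right-hand side $\partial_{x_i}\delta_h$; you would need to rerun the dyadic argument for $\tilde g$ (the near-field then contributes $O(1)$ since $\|\delta_h\|_{L^2}\le Ch^{-1}$, and each of the $\log_2(1/h)$ shells contributes $O(1)$ via $\|\nabla\tilde g\|_{L^\infty(D_j)}\le C/d_j$). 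Second, your diagnosis of the ``main obstacle'' is too pessimistic: for the value Green's function on a convex domain one has $\|\nabla(\tilde g-\tilde g_h)\|_{L^1(\Omega)}\le Ch\log(1/h)$, not merely $O(\log(1/h))$ --- this follows from the RT estimate of \cite{GSS2014} together with $\|\nabla\tilde g-\Pi\nabla\tilde g\|_{L^1(\Omega)}\le Ch|\tilde g|_{W^{2,1}(\Omega)}\le Ch\log(1/h)$, or directly from weighted-norm arguments as in \cite{MR1278258}. Hence your ``crude'' H\"older bound already gives $Ch^{k+1}\log(1/h)$, and the Schatz--Wahlbin machinery you propose to import is precisely the proof of that estimate rather than an additional device. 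With the flux bound proved and this $W^{1,1}$ estimate cited precisely, your variant closes; as written, these are the only two places where the content is deferred.
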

\begin{proof}
We follow the proof of Theorem 5 in \cite{GSS2014}. Let $z\in \Omega$ be arbitrary and let $\delta_h = \delta_h^z$ defined in proof of Theorem \ref{maxgraderrorthm}. Let $\tilde{g}$ satisfy

\begin{alignat}{2}\label{gtildeeqn}
 -\Delta \tilde{g} &= \delta_h \qquad && \mbox{in }\Omega,  \\
  \tilde{g}&= 0\qquad && \mbox{on }\partial\Omega,
\end{alignat}
and consider its continuous piecewise linear finite element approximation $\tilde{g}_h$. Then
\begin{align*}
(I_h u-u_h)({z})=&\,\,\int_{\Omega} (I_h u-u_h) \delta_h\, d{x}\,\,=\,\, \int_{\Omega} \nabla (I_h u-u_h)\cdot \nabla \tilde{g}_h \, d{x} \\
          = & \sum_{T\in \mathcal{T}_h\backslash \mathcal{T}_h^\Gamma} \int_T \nabla(I_hu-u) \cdot \nabla \tilde{g}_h)dx +  \sum_{T\in \mathcal{T}_h^\Gamma}\int_T \nabla(I_hu+w_T^u-u)\cdot\nabla  \tilde{g}_h)dx \\
            &=:\quad J_1 \,+ \,J_2.
\end{align*}

We first give a bound for $J_2$
\begin{align*}
J_2 &\,\leq\, C \max_{T\in \mathcal{T}_h^\Gamma} \| I_hu+w_T^{u}-u\|_{L^\infty(T)} \|\nabla \tilde{g}_h\|_{L^1(T)}  \\
&\,\leq\, C h^k \left(\|u^+\|_{C^{k+1}(\Omega^+)}+\|u^-\|_{C^{k+1}(\Omega^-)}\right)\|\nabla \tilde{g}_h\|_{L^1(S^\Gamma)} \\
&\,\leq \,C h^k \left(\|u^+\|_{C^{k+1}(\Omega^+)}+\|u^-\|_{C^{k+1}(\Omega^-)}\right)\left(\|\nabla \tilde{g}_h-\nabla \tilde{g}\|_{L^1(S^\Gamma)} +\|\nabla \tilde{g}\|_{L^1(S^\Gamma)}\right).
\end{align*}
In \cite{GSS2014} we proved $\|\nabla \tilde{g}\|_{L^1(S^\Gamma)}\leq C h \log(1/h) $, therefore

\begin{align*}
J_2 &\,\leq\ C h^{k+1}\log(1/h) \left(\|u^+\|_{C^{k+1}(\Omega^+)}+\|u^-\|_{C^{k+1}(\Omega^-)}\right).
\end{align*}

Now for $J_1$, we will use the Raviart-Thomas projection (see \cite{MR0483555}) $\Pi : H^1(\Omega) \rightarrow \Phi_h^D$, defined locally for any $T\in\mathcal{T}_h$, $\Pi|_T : H^1(T) \rightarrow RT_0(T)$, where
\[
RT_0(T) = [\mathbb{P}^0(T)]^2 \oplus x \mathbb{P}^0(T),\quad \Phi_h^D = \left\{ \n{\phi}\in [L^2(\Omega)]^2:\,\,\n{\phi}|_{T} \in RT_0(T)\,\,\forall T\in \mathcal{T}_h \right\}.
\]

Then, we observe
\[
J_1 = J_1(\nabla \tilde{g}_h)  = J_1(\nabla \tilde{g}_h-\Pi \nabla \tilde{g})+ J_1(\Pi \nabla \tilde{g}).
\]

In the Appendix of \cite{GSS2014} we prove the estimate $ \|\nabla \tilde{g}_h-\Pi \nabla \tilde{g}\|_{L^1(\Omega)} \leq h\log{1/h}$, then we clearly we have
\begin{align*}
J_1(\nabla \tilde{g}_h-\Pi \nabla \tilde{g})&\,\leq\, C h^{k} \left(\|u^+\|_{C^{k+1}(\Omega^+)}+\|u^-\|_{C^{k+1}(\Omega^-)}\right) \|\nabla \tilde{g}_h-\Pi \nabla \tilde{g}\|_{L^1(\Omega)} \\
& \,\leq \,C h^{k+1}\log(1/h) \left(\|u^+\|_{C^{k+1}(\Omega^+)}+\|u^-\|_{C^{k+1}(\Omega^-)}\right).
\end{align*}

Using that $\Pi (\nabla \tilde{g})$ is piecewise constant and has continuous normal components across edges, we have after integration by parts
\begin{equation*}
J_1(\Pi \nabla \tilde{g}) = \sum_{e \in \mathcal{E}_h^{\Gamma, \partial} } \int_{e} (I_h u-u) \Pi \nabla \tilde{g} \cdot n,
\end{equation*}
where $\mathcal{E}_h^{\Gamma, \partial}$  are set of edges that are both an edge of a triangle in $ \mathcal{T}_h\backslash \mathcal{T}_h^\Gamma$ and a triangle in $\mathcal{T}_h\backslash \mathcal{T}_h^\Gamma$.

Therefore, we see that
\begin{alignat*}{1}
J_1(\Pi \nabla \tilde{g}) \,=\, & \sum_{e \in \mathcal{E}_h^{\Gamma, \partial} } \int_{e} (I_h u-u) \Pi \nabla \tilde{g} \cdot n \\
\,\le\, & C h^{k}  \, \|\Pi \nabla \tilde{g}\|_{L^1(S^\Gamma)}  \left(\|u^+\|_{C^{k+1}(\Omega^+)}+\|u^-\|_{C^{k+1}(\Omega^-)}\right)  \\
\,\le\, &  C \log(1/h) h^{k+1}  \, \left(\|u^+\|_{C^{k+1}(\Omega^+)}+\|u^-\|_{C^{k+1}(\Omega^-)}\right),
\end{alignat*}
where again we used $\|\Pi \nabla \tilde{g}\|_{L^1(S^\Gamma)}\leq C h \log(1/h)$, which follows from results in  \cite{GSS2014}.

\end{proof}

\begin{rem} \label{rem1}Since $u - (u_h+w_T^u) = (I_h u - u_h) +
(u - w_T^u - I_hu)$, by using triangle inequality
and Lemma \ref{wapprox}, $u_h+w_T^u$ approximates  $u$ on $T$
by the same estimates given in
Theorems \ref{maxgraderrorthm} and \ref{maxerrorthm}.
\end{rem}

\vskip4mm
%----------------------------------------------------------------------------------------------------------------------------------------------------------------------------------------------
\section{Stokes Interface Problem}\label{SectionStokes}
%----------------------------------------------------------------------------------------------------------------------------------------------------------------------------------------------
In this section we consider the Stokes interface problem in two dimensions introduced in equation \eqref{StokesProblem}.
%The geometry of the problem is identical as the one introduced for the Poisson problem \eqref{Problem}. Let $\Omega\subset \mathbb{R}^2$ be a polygonal domain with an  immersed smooth, closed interface $\Gamma$ such that $\overline{\Omega}=\overline{\Omega}^{-} \cup \,\overline{\Omega}^{+}$ and $\Gamma$ encloses $\Omega^{-}$. We seek for a velocity vector $\n{u}$ and pressure $p$ satisfying
%\begin{subequations}\label{StokesProblem}
%\begin{align}
% \nonumber to remove numbering (before each equation)
%                             -\Delta \n{u}+\nabla p  &= \n{f}     \qquad\mbox{in } \Omega, \label{StokesProblem:a} \\
%                                   \nabla\cdot \n{u} &= \,0         \qquad\mbox{in } \Omega, \label{StokesProblem:b} \\
%                                                   \n{u} &= \,0         \qquad\mbox{on } \partial \Omega, \label{StokesProblem:c} \\
%  \left[ D_{\n{n}}\n{u} -  p \n{n} \right] &= \n{\beta} \qquad\mbox{on } \Gamma, \label{StokesProblem:d}
%\end{align}
%\end{subequations}
%where $I\in \mathcal{M}_{2\times2}$ denotes the identity matrix and we used the same definitions introduced before for the jumps.
Equivalently, we can incorporate the jump condition, equation \eqref{StokesProblem:d}, as follows
\begin{subequations}\label{StokesProblem2}
\begin{align}
% \nonumber to remove numbering (before each equation)
  -\Delta \n{u}+\nabla p  &= \n{f}+\n{B} \qquad\mbox{in }\Omega \label{StokesProblem2:a} \\
        \nabla\cdot \n{u} &= 0           \qquad \mbox{in }  \Omega \label{StokesProblem2:b} \\
                        \n{u} &= 0           \qquad \mbox{on }  \partial \Omega \label{StokesProblem2:c}
\end{align}
\end{subequations}

where
\begin{equation*}
\n{B}(x) = \int_{0}^A \n{\beta}(s) \delta(x-\n{X}(s)) ds,
\end{equation*}
and $ \n{X}(s)$ with $0<s<A$ is the arc-length parametrization of the interface $\Gamma$.

%\subsection{Derivation of jump conditions}

Following the ideas of LeVeque and Li in \cite{MR1443639}, we can easily write individual jump conditions for the velocity and the pressure in terms of the tangential and normal component of the data $\n{\beta}$. Let $\theta$ be the angle between the $x$-direction ($x$-axis) and $\n{n}$-direction pointing outward the interface $\Gamma$ at a point $\n{X}(s)$. Then, we write the normal and tangential components

\begin{equation*}
\hat{\n{\beta}}(s)= \left(
  \begin{array}{cc}
    \hat{\beta}_1 \\
     \hat{\beta}_2 \\
  \end{array} \right)  = \left(
  \begin{array}{cc}
    \cos(\theta) & \sin(\theta) \\
    -\sin(\theta) & \cos(\theta) \\
  \end{array} \right) \n{\beta}(s).
\end{equation*}

The jumps conditions for the velocity and the pressure are given by
\begin{align}
\nonumber[p] &=  \hat{\beta}_1,  \\
\label{JumpsStokes} \left[D_{\n{n}} p \right] &= \frac{d}{ds}\hat{\beta}_2,\\
\nonumber\left[\n{u}\right] & = 0,\\
\nonumber\left[D_{\n{n}} \n{u} \right] & = \n{\beta} + \hat{\beta}_1 \n{n}.
\end{align}

For the sake of completeness, we present the derivation of this jumps in Appendix \ref{Ajumps}.

\subsection{The finite element method.}

We first present the standard variational formulation of Stokes interface problem \eqref{StokesProblem}. Find $(\n{u}, p) \in [H_0^1(\Omega)]^2 \times L_0^2(\Omega)$, such that

\begin{align}\label{ContinuousProblem}
\int_{\Omega} \nabla \n{u} : \nabla \n{v} \,d{x}  - \int_{\Omega} p\,\nabla\cdot\n{v} d{x} & = \int_{\Omega} \n{f} \cdot\n{v} d{x} +\int_{\Gamma}  \n{\beta}\cdot \n{v} ds  \qquad \forall \n{v}\in [H_0^1(\Omega)]^2,\\
\nonumber \int_{\Omega} q\,\nabla\cdot\n{u} d{x} & = 0\qquad \forall q\in L_0^2(\Omega),
\end{align}
where $L_0^2(\Omega)\,=\,\{ q\in L^2(\Omega):\,\,\int_\Omega q = 0 \}$.

As before, we consider a sequence of triangulations of $\bar{\Omega}$, $\mathcal{T}_h$ with $0<h<1$, and $ \overline{\Omega} = \cup_{T\in \mathcal{T}_h} \overline{T}$, with the elements $T$ mutually disjoint. Let $ h_T$ denote the diameter of the element $T$ and $h = \max_{T} h_T$. We assume that the mesh is quasi-uniform and shape regular.

We consider  a class of finite element subspaces $\n{V}_h\subset [H_0^1(\Omega)]^2$ and $M_h \subset L_0^2(\Omega)$ satisfying the following assumptions:
\begin{enumerate}[label=\bfseries A\arabic*]
  \item \label{A1}  $\n{V}_h$ and $M_h $ are a pair of inf-sup stables subspaces, with $\n{V}_h \subset [H_0^1(\Omega)]^2$.

  \item \label{A2} We let $k \ge 1$ as the maximum integer such that
  \begin{align*}
  \n{V}^k_h&\,:=\,\left\{\n{v}\in  C(\Omega)\cap[H_0^1(\Omega)]^2:\,\n{v}|_T\in [\mathbb{P}^{k}(T)]^2,\,\,\forall T\in \mathcal{T}_h\right\}\,\subseteq\, \n{V}_h,
  \end{align*}
  and, if $M_h$ contains the discontinuous pressure space of degree $k-1$ we let
  \begin{align*}
  M^{k-1}_h &\,:=\,\left\{q \in   L_0^2(\Omega) :\,q|_T\in \mathbb{P}^{k-1}(T),\,\,\forall T\in \mathcal{T}_h\right\}\,\subseteq\, M_h,
  \end{align*}
  otherwise
  \begin{align*}
  M^{k-1}_h &\,:=\,\left\{q \in  C(\Omega)\cap L_0^2(\Omega) :\,q|_T\in \mathbb{P}^{k-1}(T),\,\,\forall T\in \mathcal{T}_h\right\}\,\subseteq\, M_h.
  \end{align*}
\end{enumerate}

For instance, $k=1$ for the pair $\mathbb{P}_2^2 - \mathbb{P}_0$, reduced $\mathbb{P}_2^2 - \mathbb{P}_0$
and mini element, while $k=2$ for Taylor-Hood  $\mathbb{P}_2^2 - \mathbb{P}_1$.

We next define the interpolant onto these spaces. We let $I_h$ be the interpolant defined componentwise in \eqref{interpolant} onto the space $\n{V}^k_h$, and $J_h$ be the interpolant defined in \eqref{interpolant} onto the space $M^{k-1}_h$, in the case of continuous pressure finite element spaces. Otherwise, if $M_h$ contains discontinuous finite element pressure spaces we define $J_h$ to be the $L^2$ projection onto $M^{k-1}_h$.

Find  $(\n{u}_h,p_h) \in  \n{V}_h\times M_h$, such that
\begin{align}\label{Stokesfem}
\nonumber \int_{\Omega} \nabla \n{u}_h : \nabla \n{v} \,d{x}  - \int_{\Omega} p_h\nabla\cdot\n{v} d{x} \,\,= & \int_{\Omega} \n{f} \cdot\n{v} d{x} +\int_{\Gamma}  \n{\beta}\cdot \n{v} ds  \\
                                                                                         & -\sum_{T\in \mathcal{T}_h^\Gamma}\left(\int_{T} \nabla \n{w}_T^{\n{u}} :\nabla\n{v} \, d{x} +
\int_{T} w_T^p\nabla\cdot\n{v} d{x}    \right),  \\
\nonumber \int_{\Omega} q\nabla\cdot\n{u}_h d{x} \,\,= &  -\sum_{T\in \mathcal{T}_h^\Gamma} \int_{T} q \nabla\cdot \n{w}_T^{\n{u}} d{x},
\end{align}
for all $(\n{v},q) \in  \n{V}_h\times M_h$.

Using the last two equations of \eqref{JumpsStokes}, the correction function $\n{w}_T^{\n{u}}$ is defined
componentwise as in Section \ref{correctionfn}.
The same is true for $w_T^p$ when $J_h$ is the Lagrange interpolant by using the first two equations
of \eqref{correctionfn}. In the case $J_h$ is the $L^2$ projection we replace equation \eqref{correction2} with the condition $J_h (w_T^p) =0$, i.e.
\begin{alignat}{1}
\left[ D_{\n{\eta}}^{k-\ell} w_T^p (x_i^{\ell,T})\right]=& \left[ D_{\n{\eta}}^{k-\ell} p(x_i^{\ell,T})\right] \quad \text{ for } 0 \le i \le \ell \text{ and } 0 \le  \ell \le k, \label{correctionp1}\\
J_h(w_T^p)= &\,0. \label{correctionp2}
\end{alignat}

Note that each component of  $\n{w}_T^{\n{u}}$ and  $w_T^p$ are obtained
independently to each other, therefore, the Lemma \ref{wapprox} can be applied to each one separately.

Similarly to Lemma \ref{wapprox}, we have the following estimate for the interpolant $J_h$ and the correction function $w_T^p$
\begin{equation*}
\|p-J_hp-w_T^p\|_{L^\infty(T^\pm)} \le C \,  h_T^{k+1} \left(\|p^+\|_{C^{k+1}(T)} + \|p^-\|_{C^{k+1}(T)}\right)\quad\forall T\in \mathcal{T}_h^\Gamma,
\end{equation*}
where $C$ is a constant depending on the shape regularity of $T$, the constant $k$ and the regularity of $\Gamma$.  Then, the following result holds and can be proved similar to Lemma \ref{lemmaIuuh}.

\begin{lemma}\label{lemmaStokes} Let $(\n{u},p)$ be solution of \eqref{StokesProblem} and assume that $\n{u}^\pm\in[C^{k+1}(\Omega^\pm)]^2$ and $p^\pm\in C^{k}(\Omega^\pm)$. Let $\n{V}_h$ and $M_h$ be the finite element spaces satisfying assumptions {\bf A1-A2} and consider the definitions above for $k$, $I_h$ and $ J_h$. Let $(\n{u}_h, p_h)\in \n{V}_h\times M_h$  be solution of \eqref{Stokesfem}. Then, it holds
\begin{alignat*}{1}
 \int_{\Omega} \nabla (I_h \n{u}- &\n{u}_h ): \nabla \n{v} \,d{x}  \,-\, \int_{\Omega} (J_h p- p_h)\nabla\cdot\n{v} d{x} \,\,  \\ &  \le \,\, C \, h^{k} \|\nabla \n{v}\|_{L^1(\Omega)}\left(\|\n{u}^+\|_{C^{k+1}(\Omega^+)}+\|\n{u}^-\|_{C^{k+1}(\Omega^-)}+\|p^+\|_{C^{k}(\Omega^+)}+\|p^-\|_{C^{k}(\Omega^-)}\right)  \\
 &\\
 \int_{\Omega} q \nabla\cdot(& \n{u}_h -I_h \n{u})d{x} \,\,\le  \,\,  C h^{k} \|q\|_{L^1(\Omega)} \left(\|\n{u}^+\|_{C^{k+1}(\Omega^+)}+\|\n{u}^-\|_{C^{k+1}(\Omega^-)}\right),
\end{alignat*}
\end{lemma}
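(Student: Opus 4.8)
The plan is to follow the proof of Lemma~\ref{lemmaIuuh} line by line, now carrying the momentum balance and the divergence constraint as two separate identities and closing the estimates with the componentwise version of Lemma~\ref{wapprox} together with the stated approximation bound for $w_T^p$. \emph{First estimate.} I would start from
\[
\int_{\Omega}\nabla(I_h\n{u}-\n{u}_h):\nabla\n{v}\,dx - \int_{\Omega}(J_h p-p_h)\,\nabla\cdot\n{v}\,dx
\]
and insert $\pm\n{u}$ and $\pm p$ to split it into the interpolation part $\int_{\Omega}\nabla(I_h\n{u}-\n{u}):\nabla\n{v} - (J_h p-p)\nabla\cdot\n{v}\,dx$ plus the difference of the continuous momentum form evaluated at $(\n{u},p)$ and at $(\n{u}_h,p_h)$. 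By the continuous formulation \eqref{ContinuousProblem} and the discrete problem \eqref{Stokesfem} the $\n{f}$- and $\n{\beta}$-data terms cancel, so that this difference equals exactly $\sum_{T\in\mathcal{T}_h^\Gamma}\bigl(\int_T\nabla\n{w}_T^{\n{u}}:\nabla\n{v}\,dx + \int_T w_T^p\,\nabla\cdot\n{v}\,dx\bigr)$. Regrouping triangle by triangle, as in Lemma~\ref{lemmaIuuh}, the contributions on $T\notin\mathcal{T}_h^\Gamma$ reduce to the plain interpolation errors, while on $T\in\mathcal{T}_h^\Gamma$ the corrections combine them into $\nabla(I_h\n{u}+\n{w}_T^{\n{u}}-\n{u})$ and $J_hp-w_T^p-p$.

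I would then bound each term by H\"older's inequality in the form $\|\cdot\|_{L^\infty(T^\pm)}\|\nabla\n{v}\|_{L^1(T)}$, using $\|\nabla\cdot\n{v}\|_{L^1(T)}\le C\|\nabla\n{v}\|_{L^1(T)}$. On interface triangles Lemma~\ref{wapprox}, applied componentwise to $\n{u}$, controls the velocity term at order $h_T^{k}$ (the $j=1$ case), and the stated pressure estimate $\|p-J_hp-w_T^p\|_{L^\infty(T^\pm)}\le Ch_T^{k+1}(\cdots)$ controls the pressure term at the higher order $h_T^{k+1}$. On the remaining triangles, smoothness of $\n{u}^\pm\in[C^{k+1}]^2$ and $p^\pm\in C^{k}$ together with standard $L^\infty$ interpolation estimates give order $h_T^{k}$. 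Summing over all $T$ and factoring out $\|\nabla\n{v}\|_{L^1(\Omega)}$ yields the first bound.

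\emph{Second estimate.} Here I would use the discrete divergence equation of \eqref{Stokesfem}, namely $\int_\Omega q\,\nabla\cdot\n{u}_h\,dx=-\sum_{T\in\mathcal{T}_h^\Gamma}\int_T q\,\nabla\cdot\n{w}_T^{\n{u}}\,dx$, together with $\nabla\cdot\n{u}=0$, to write $\int_\Omega q\,\nabla\cdot(\n{u}_h-I_h\n{u})\,dx = \int_\Omega q\,\nabla\cdot(\n{u}-I_h\n{u})\,dx - \sum_{T\in\mathcal{T}_h^\Gamma}\int_T q\,\nabla\cdot\n{w}_T^{\n{u}}\,dx$. Regrouping again, interface triangles contribute $\int_T q\,\nabla\cdot(\n{u}-I_h\n{u}-\n{w}_T^{\n{u}})\,dx$, which I would bound by $\|\nabla\cdot(\n{u}-I_h\n{u}-\n{w}_T^{\n{u}})\|_{L^\infty(T^\pm)}\le\|\nabla(\cdots)\|_{L^\infty(T^\pm)}\le Ch_T^k(\cdots)$ via Lemma~\ref{wapprox}, while the non-interface triangles are handled by standard interpolation; this gives the claimed $O(h^k)\|q\|_{L^1(\Omega)}$ bound.

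\emph{Main obstacle.} The manipulations are essentially bookkeeping extensions of Lemma~\ref{lemmaIuuh}, so the only genuinely new ingredient is the approximation estimate for $w_T^p$. The delicate case is the discontinuous-pressure setting, where $J_h$ is the $L^2$ projection and the defining condition \eqref{correctionp2} reads $J_h(w_T^p)=0$ rather than nodal vanishing; one must check that the analog of Lemma~\ref{wapprox} still holds in that case. Since that estimate is stated just above the lemma, I may use it directly, and the only remaining point to verify is that the velocity ($O(h^k)$) and pressure ($O(h^{k+1})$) contributions combine to the stated $O(h^k)$ order.
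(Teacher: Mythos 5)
Your proposal follows the paper's proof essentially verbatim: the paper likewise subtracts the discrete system \eqref{Stokesfem} from the continuous formulation \eqref{ContinuousProblem} so that the $\n{f}$- and $\n{\beta}$-terms cancel, regroups element by element into $\nabla(I_h\n{u}-\n{u}+\n{w}_T^{\n{u}})$ and the corrected pressure error on interface triangles and plain interpolation errors on uncut triangles, and closes with Lemma \ref{wapprox} applied componentwise, the stated estimate for $w_T^p$, and smoothness of $(\n{u},p)$ away from $\Gamma$. The small sign discrepancy in your pressure grouping ($J_hp-w_T^p-p$ versus the paper's $J_hp-p+w_T^p$) is immaterial, since all bounds are taken in absolute value and the sign convention for $w_T^p$ can be absorbed into its defining jump data $c_{i,\ell}$.
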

for all  $(\n{v},\,q) \,\in\, \n{V}_h\times M_h$.

\begin{proof}
For the first equation we use \eqref{ContinuousProblem}  and \eqref{Stokesfem} to obtain
\begin{align*}
\int_{\Omega} \nabla (I_h\n{u}-\n{u}_h) : \nabla \n{v} \,&d{x} - \int_{\Omega} (J_h p- p_h)\nabla\cdot\n{v} d{x}  \,\, \\
= & \int_{\Omega} \nabla (I_h\n{u}-\n{u}) : \nabla \n{v} \,d{x} -   \int_{\Omega} (J_h p- p)\nabla\cdot\n{v} d{x}  \\
&\,\,+\,\,\sum_{T\in \mathcal{T}_h^\Gamma}\left(\int_{T} w_T^p\nabla\cdot\n{v} d{x}   + \int_{T} \nabla \n{w}_T^{\n{u}} :\nabla\n{v} \, d{x} \right) \\
=&\sum_{T\in \mathcal{T}_h\backslash\mathcal{T}_h^\Gamma} \left(\int_{T} \nabla (I_h\n{u}-\n{u}) : \nabla \n{v} \,d{x} -   \int_{T} (J_h p- p)\nabla\cdot\n{v} d{x} \right)  \\
&\,\ +\,\,
\sum_{T\in \mathcal{T}_h^\Gamma}\left(\int_{T} \nabla(I_h\n{u}-\n{u}+ \n{w}_T^{\n{u}}) :\nabla\n{v} \, d{x} +
\int_{T} (J_h p- p+w_T^p)\nabla\cdot\n{v} d{x}\right).
\end{align*}

Similarly for the second equation we have
\begin{align*}
\int_{\Omega} q \nabla\cdot( I_h \n{u}-\n{u}_h )d{x} \,\,&=  \,\, \int_{\Omega} q \nabla\cdot( I_h \n{u}-\n{u} )d{x} +  \sum_{T\in \mathcal{T}_h^\Gamma} \int_{T} q \nabla\cdot \n{w}_T^{\n{u}} d{x} \\
& =\sum_{T\in \mathcal{T}_h\backslash\mathcal{T}_h^\Gamma} \int_{T} q \nabla\cdot (I_h \n{u}-\n{u}) d{x}+\sum_{T\in \mathcal{T}_h^\Gamma} \int_{T} q \nabla\cdot (I_h \n{u}-\n{u}+\n{w}_T^{\n{u}}) d{x}.
\end{align*}

Then the results follow from the properties of the correction functions $\n{w}_T^{\n{u}}$ and $w_T^p$ Lemma \ref{wapprox}, and the fact that $(\n{u},\,p)$ is smooth on $T\in\mathcal{T}_h\backslash \mathcal{T}_h^\Gamma$.
\end{proof}

Analogously to the proof of Theorem \ref{maxgraderrorthm}, we can prove the following result using approximate Green's function estimates for the Stokes problem. We give a sketch of the proof.

\begin{theorem}\label{thmStokes}
Let $(\n{u},p)$ be solution of \eqref{StokesProblem} and assume that $\n{u}^\pm\in[C^{k+1}(\Omega^\pm)]^2$ and $p^\pm\in C^{k}(\Omega^\pm)$. Let $\n{V}_h$ and $M_h$ be the finite element spaces satisfying assumptions {\bf A1-A2} and consider the definitions above for $k$, $I_h$ and $ J_h$. Let $(\n{u}_h, p_h)\in \n{V}_h\times M_h$  be solution of \eqref{Stokesfem}. Then, there exists a constant $C>0$, such that
\begin{align}\label{Stokeserror}
\|\nabla(I_h\n{u}-\n{u}_h)\|_{L^\infty(\Omega)} + \|J_h(p) - p_h\|_{L^\infty(\Omega)} \, \leq\, C h^k&\left(\|\n{u}^+\|_{C^{k+1}(\Omega^+)}+\|\n{u}^-\|_{C^{k+1}(\Omega^-)}\right. \\
\nonumber &\,\,\, \left.+\|p^+\|_{C^{k}(\Omega^+)}+\|p^-\|_{C^{k}(\Omega^-)}\right)
\end{align}
\end{theorem}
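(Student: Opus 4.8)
The plan is to mimic the scalar argument of Theorem \ref{maxgraderrorthm}, but using the approximate Green's function (or Green's tensor) for the Stokes operator in place of the Laplacian. Fix $1 \le m \le 2$ and suppose the maximum of $|\partial_{x_m}(I_h\n{u}-\n{u}_h)_i|$ (over $i$) or of $|J_h(p)-p_h|$ is attained at some point $z$. As in the Poisson case I would introduce the regularized delta function $\delta_h^z$ satisfying \eqref{rz}--\eqref{deltaestimates}, and define the approximate Green's velocity-pressure pair $(\n{g},\pi)$ solving a Stokes problem with right-hand side $\partial_{x_m}\delta_h^z\,\n{e}_i$ (for the velocity-gradient component) or $\delta_h^z$ coupled through the incompressibility constraint (for the pressure), together with its finite element approximation $(\n{g}_h,\pi_h)\in\n{V}_h\times M_h$. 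The two quantities I must control are $(I_h\n{u}-\n{u}_h)$ tested against the discrete Green's data and $(J_h p - p_h)$ tested similarly; representing them via $(\n{g}_h,\pi_h)$ and invoking Lemma \ref{lemmaStokes} is the central reduction.

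Concretely, I would write
\begin{equation*}
\partial_{x_m}(I_h\n{u}-\n{u}_h)_i(z) = \int_\Omega \nabla(I_h\n{u}-\n{u_h}):\nabla\n{g}_h\,dx - \int_\Omega(J_h p - p_h)\,\nabla\cdot\n{g}_h\,dx + \int_\Omega \pi_h\,\nabla\cdot(I_h\n{u}-\n{u}_h)\,dx,
\end{equation*}
where the bilinear-form structure of \eqref{Stokesfem} and the defining equations for $(\n{g}_h,\pi_h)$ are used to recognize the left-hand side. Then Lemma \ref{lemmaStokes} bounds the first two terms together by $C\,h^k\|\nabla\n{g}_h\|_{L^1(\Omega)}(\cdots)$, and the second estimate of Lemma \ref{lemmaStokes} controls the divergence term by $C\,h^k\|\pi_h\|_{L^1(\Omega)}(\cdots)$. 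As in Theorem \ref{maxgraderrorthm}, the genuinely delicate piece is the part of $J_1$ supported away from $\Gamma$: there one adds and subtracts the continuous Green's functions $(\n{g},\pi)$, uses the Scott--Rannacher-type $L^1$ stability $\|\nabla(\n{g}-\n{g}_h)\|_{L^1(\Omega)}\le C$ and $\|\pi-\pi_h\|_{L^1}\le C$ for Stokes, integrates by parts on $R_h=\cup_{T\notin\mathcal{T}_h^\Gamma}T$, and estimates the resulting boundary term $\int_{\partial R_h\setminus\partial\Omega}(I_h\n{u}-\n{u})\,D_{\n{n}}\n{g}\,ds$ using a bound of the form $\|D_{\n{n}}\n{g}\|_{L^1(\partial R_h\setminus\partial\Omega)}\le C/h$ analogous to \eqref{Dngbound}.

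The pressure estimate $\|J_h p - p_h\|_{L^\infty(\Omega)}$ is handled by the same device with a Green's pair whose data drives the pressure component; here one exploits that $M_h$ contains $M_h^{k-1}$ and uses the inf-sup stability assumption \ref{A1} to guarantee that the discrete Stokes Green's problem is well posed and that $(\n{g}_h,\pi_h)$ enjoys the required $L^1$-norm stability against $(\n{g},\pi)$. The essential new ingredients relative to the Poisson proof are therefore the local (log-free) and global weighted $L^1$ bounds for the Stokes Green's velocity gradient and pressure, i.e.\ the Stokes analogues of \eqref{gmgh}, \eqref{glocal} and \eqref{Dngbound}.

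I expect the main obstacle to be establishing these approximate-Green's-function estimates for the Stokes system on the strip $S^\Gamma$ and on $\partial R_h\setminus\partial\Omega$ — in particular the pointwise/$L^1$ resolvent-type bounds and their discrete counterparts for a general inf-sup stable pair $(\n{V}_h,M_h)$. The interior-estimate machinery of Scott--Rannacher and the weighted-norm techniques cited for the scalar case do extend to Stokes, but coupling the velocity and pressure through the divergence constraint means the duality argument must carry both components simultaneously, and verifying $\|\nabla\n{g}_h\|_{L^1}\le C\log(1/h)$ and $\|\pi_h\|_{L^1}\le C\log(1/h)$ for the chosen discrete Green's pair is the technically demanding step; this is why the theorem is stated with only a sketch and the $k+1$ max-norm refinement of Theorem \ref{maxerrorthm} is not pursued here.
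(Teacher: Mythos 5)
Your proposal follows essentially the same route as the paper's (sketched) proof: a regularized Dirac delta, the Stokes approximate Green's pair $(\n{g},\lambda)$ with data $(\partial_{x_j}\delta_h)\n{\mathrm{e}}_i$ together with its inf-sup stable Galerkin approximation $(\n{g}_h,\lambda_h)$, the reduction via the decomposition from Lemma \ref{lemmaStokes} into the interface sum $J_2$ and the non-interface sum $J_1$, and then the same treatment as in Theorem \ref{maxgraderrorthm} (local log-free $L^1$ bounds near $\Gamma$, $\|\nabla(\n{g}-\n{g}_h)\|_{L^1(\Omega)}\le C$, integration by parts on $R_h$ with the $C/h$ boundary estimate), with the underlying Stokes Green's-function bounds imported from the literature exactly as the paper does. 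Apart from a harmless sign slip in your representation formula (the correct identity is $\pm\partial_{x_m}(I_h\n{u}-\n{u}_h)_i(z)=\int_\Omega \nabla(I_h\n{u}-\n{u}_h):\nabla\n{g}_h\,dx-\int_\Omega \pi_h\,\nabla\cdot(I_h\n{u}-\n{u}_h)\,dx$, and the term $\int_\Omega(J_hp-p_h)\nabla\cdot\n{g}_h\,dx$ actually vanishes because $J_hp-p_h\in M_h$ and $\n{g}$ is discretely divergence-free), your outline matches, and in places elaborates on, the paper's argument.
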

\begin{proof}
Let $\n{e}_h^{\n{u}}=I_h \n{u} - \n{u}_h$ and suppose that the maximum of $\partial_{x_j}(\n{e}_h)_{i} \,\,(i,j=1,2)$ occurs at $z\in \Omega$. Suppose $z\in T\in \mathcal{T}_h$. Considering the definition and properties of the regularized Dirac delta function $\delta_h = \delta_h^z$ introduced in \eqref{rz} and \eqref{deltaestimates}, we define the approximate Green's function $(\n{g},\,\lambda) \in [H_0^1(\Omega)]^2 \times L^2(\Omega)$ such that

\begin{align*}
-\Delta \n{g} + \nabla \lambda  &= (\partial x_j \delta_h)\n{\mathrm{e}}_i\quad \mbox{in } \Omega, \\
\nabla \cdot \n{g} &= 0 \quad \mbox{in } \Omega,\\
\n{g} &= 0\quad \mbox{on } \partial \Omega ,
\end{align*}
where $\n{\mathrm{e}}_i$ denotes the $i$th standard canonical basis vector of $\mathbb{R}^2$. We also consider its finite element approximation $(\n{g}_h,\,\lambda_h) \in \n{V}_h\times M_h$ that satisfies
\begin{align*}
\int_{\Omega} \nabla(\n{g} - \n{g}_h) : \nabla \n{\chi} d x - \int_{\Omega} (\lambda-\lambda_h)\nabla\cdot\n{\chi} dx & = 0 \quad \forall \n{\chi}\in V_h,\\
\int_{\Omega} \omega \nabla\cdot (\n{g}-\n{g}_h) dx &= 0 \quad \forall \omega\in M_h.
\end{align*}

Using the definition of $\delta_h$ we have

\begin{align*}
\|(\partial_{x_j} \n{e}_h^{\n{u}})_i\|_{L^\infty(\Omega)} \,=\, \left|(\partial_{x_j} \n{e}_h^{\n{u}}(z))_i\right|\,=\,\left| \int_{\Omega} \partial_{x_j}\delta_h e_i\cdot\n{e}_h^{\n{u}} d{x}\right|,
\end{align*}
and then

\begin{equation*}
\|(\partial_{x_j} \n{e}_h^{\n{u}})_i\|_{L^\infty(\Omega)} \,=\, \left| \int_{\Omega} \nabla\n{g} : \nabla \n{e}_h^{\n{u}} d x - \int_{\Omega} \lambda\nabla\cdot\n{e}_h^{\n{u}}d{x} \right| \,=\,\left| \int_{\Omega} \nabla\n{g}_h : \nabla \n{e}_h^{\n{u}}d x - \int_{\Omega} \lambda_h\nabla\cdot\n{e}_h^{\n{u}}d{x} \right|.
\end{equation*}

Following the proof of Lemma \ref{lemmaStokes}, we see that
\begin{equation*}
 \int_{\Omega} \nabla\n{g}_h : \nabla \n{e}_h^{\n{u}}d x - \int_{\Omega} \lambda_h\nabla\cdot\n{e}_h^{\n{u}}d{x}  \,= \, J_1\, +\, J_2,
\end{equation*}
where
\begin{align*}
J_1\,& =\,\sum_{T\in \mathcal{T}_h\backslash\mathcal{T}_h^\Gamma} \left( \int_{T} \nabla (I_h\n{u}-\n{u}) : \nabla \n{g}_h \,d{x}+ \int_{T} \lambda_h \nabla\cdot (I_h \n{u}-\n{u}) d{x} \right) \\
J_2\,& = \,\sum_{T\in \mathcal{T}_h^\Gamma}\left( \int_{T} \nabla(I_h\n{u}-\n{u}+ \n{w}_T^{\n{u}}) :\nabla\n{g}_h \, d{x}+\int_{T} \lambda_h \nabla\cdot (I_h \n{u}-\n{u}+\n{w}_T^{\n{u}}) d{x} \right).
\end{align*}
As in the proof of  Theorem \ref{maxgraderrorthm}, the estimates of $J_1$ and $J_2$ follows from the properties of the correction functions and bounds for the Green's functions that can be found for example in  \cite{MR2066358,MR2945141}. The estimate for the error of the pressure follows from similar arguments, we leave the details to the reader.
\end{proof}
\begin{rem} \label{rem2}
Using the same arguments as Remark \ref{rem1}, we can establish
the same a priori error estimates given in Theorem \ref{thmStokes}
for $\n{u} - (\n{u}_h +  \n{w}_T^{\n{u}})$ and
for  $ p - (p_h + w_T^p)$. We note that the method \eqref{Stokesfem}
gives $p_h \in M_h$, therefore, if $J_h$ is the $L^2$ projection, then
$ (p_h + w_T^p) \in L^2_0(\Omega)$, if  $J_h$ is the interpolation operator,
$ p_h + w_T^p$ might not have average zero and a constant $O(h^{k+1})$
must be added to make it in $L^2_0(\Omega)$.
\end{rem}

%----------------------------------------------------------------------------------------------------------------------------------------------------------------------------------------------
\section{Numerical Experiments}\label{SectionNumericalExamples}
%----------------------------------------------------------------------------------------------------------------------------------------------------------------------------------------------
We illustrate the performance of our method with some numerical examples. We consider the square domain $ \Omega = [-1,\,1]^2$, and we triangulate the domain with structured and non-structured triangular meshes. We tabulate the $L^2$ and $H^1$ semi-norm errors as well as the $L^\infty$ and $W^{1,\infty}$ semi-norm errors, with their respective order of convergence. Plots of approximate solutions and rate of convergence are also provided. For the case when the interface is not a straight line we need to integrate over curved region. We address this problem in Appendix \ref{quadrature} giving explicit quadrature formulas.

\subsection{Numerical examples for Poisson interface problem \eqref{Problem}.}

Let $u$ be the exact solution of problem \eqref{Problem}, $ u_h $ be the solution by the method defined in \eqref{fem}. We define the error with respect to the Lagrange interpolant $I_h$ and the respective order of convergence (associated to the error and the norm) as follows
\begin{equation*}
e_h := u_h - I_h u,\quad \mbox{r}(e,\|\cdot\|) := \frac{\log(\|e_{h_{l+1}}\|/\|e_{h_l}\|)}{ \log(h_{l+1}/h_{l})}.
\end{equation*}

We will illustrate our results with two numerical examples using piecewise quadratic polynomials, i.e., $k=2$. Note that $I_h$ is then the piecewise quadratic Lagrange interpolant.

\begin{enumerate}[label=\bfseries Ex. \theenumi]
\item  \label{itm:P1} Consider an exact solution of problem \eqref{Problem}
 \begin{equation*}
    u(x,y)\quad = \quad\left\{
       \begin{array}{ll}
         (2/3+x)^3, & \hbox{if } x < 1/3 \\
         (4/3-x)^3, & \hbox{if } x \geq 1/3.
       \end{array}
     \right.
\end{equation*}
In this case, the interface $\Gamma$ is a straight line. We summarize the results for structured and non-structured meshes in the following tables.

\begin{table}[H]
\centering
\begin{tabular}{|c|cc|cc|cc|cc|}\hline
$h$ & $\|e_h\|_{L^2}$ & $\mbox{r}$& $\|e_h\|_{L^\infty}$ & $\mbox{r}$ & $\|\nabla e_h\|_{L^2}$ & $\mbox{r}$& $\|\nabla e_h\|_{L^\infty}$ & $\mbox{r}$\\ \hline
3.5e-1 &    8.41e-5  &            &   1.83e-4   &           &   1.53e-3    &         &    3.72e-3&\\
1.8e-1 &    7.49e-6  &    3.49    &   2.33e-5   &    2.97   &   2.76e-4    &  2.47   &    9.29e-4   &    2.00\\
8.8e-2 &    6.63e-7  &    3.50    &   2.92e-6   &    2.99   &   4.92e-5    &  2.49   &    2.32e-4   &    2.00\\
4.4e-2 &    5.85e-8  &    3.50    &   3.64e-7   &    3.00   &   8.74e-6    &  2.49   &    5.80e-5   &    2.00\\
2.2e-2 &    5.16e-9  &    3.50    &   4.56e-8   &    3.00   &   1.55e-6    &  2.50   &    1.45e-5   &    2.00  \\\hline
\end{tabular}\vskip2mm
\caption{Errors and orders of convergence of method \eqref{fem} for \ref{itm:P1}, on structured meshes.}
\end{table}

\begin{table}[H]
\centering
\begin{tabular}{|c|cc|cc|cc|cc|}\hline
$h$ & $\|e_h\|_{L^2}$ & $\mbox{r}$& $\|e_h\|_{L^\infty}$ & $\mbox{r}$ & $\|\nabla e_h\|_{L^2}$ & $\mbox{r}$& $\|\nabla e_h\|_{L^\infty}$ & $\mbox{r}$\\ \hline
5.0e-1  &   4.92e-4  &           &    1.99e-3    &          &   9.95e-3    &          &   9.54e-2  & \\
2.5e-1  &   3.86e-5  &    3.67   &    2.37e-4    &   3.07   &   1.71e-3    &  2.54    &   2.41e-2     &  1.99\\
1.3e-1  &   3.86e-6  &    3.32   &    4.42e-5    &   2.43   &   3.57e-4    &  2.26    &   7.98e-3     &  1.59\\
6.3e-2  &   3.89e-7  &    3.31   &    5.99e-6    &   2.88   &   7.82e-5    &  2.19    &   1.92e-3     &  2.06\\
3.1e-2  &   4.44e-8  &    3.13   &    7.74e-7    &   2.95   &   1.87e-5    &  2.06    &   5.06e-4     &  1.92  \\\hline
\end{tabular}\vskip2mm
\caption{Errors and orders of convergence of method \eqref{fem} for \ref{itm:P1}, on non-structured meshes.}
\end{table}

We observe optimal order of convergence for both, structured and non-structured meshes, confirming the results of Theorem \ref{maxgraderrorthm} and \ref{maxerrorthm}. By structured mesh we mean bisecting in the
northeast direction each cell of a uniform rectangular grid. When $\Gamma$ is a straight line, the numerical
quadrature is simplified, otherwise we need to integrate over curved elements (see Appendix \ref{quadrature}). A superconvergence phenomenon is also observed in the $L^2$ norm for structured meshes, inherited from the problem without interface.
\vskip2mm

\item\label{itm:P2} Consider an exact solution of problem \eqref{Problem}
 \begin{equation*}
    u(x)\quad = \quad\left\{
       \begin{array}{ll}
         1, & \hbox{if } r \leq 1/3 \\
         1-\log(3r), & \hbox{if } r > 1/3
       \end{array}
     \right. \quad x\in [-1,1]^2,
\end{equation*}
 where $r =\|x\|_2$. We summarize the errors and order of convergence in the following tables
\begin{table}[H]
\centering
\begin{tabular}{|c|cc|cc|cc|cc|}\hline
$h$ & $\|e_h\|_{L^2}$ & $\mbox{r}$& $\|e_h\|_{L^\infty}$ & $\mbox{r}$ & $\|\nabla e_h\|_{L^2}$ & $\mbox{r}$& $\|\nabla e_h\|_{L^\infty}$ & $\mbox{r}$\\ \hline
7.1e-1 &    1.70e-2  &           &     4.76e-2 &            &     1.89e-1 &          &       4.37e-1 &\\
3.5e-1 &    1.73e-3  &    3.29   &    5.09e-3  &     3.22   &   3.66e-2   &   2.37   &    1.29e-1    &   1.76\\
1.8e-1 &    1.49e-4  &    3.54   &    7.41e-4  &     2.78   &   5.66e-3   &   2.69   &    3.05e-2    &   2.08\\
8.8e-2 &    1.22e-5  &    3.61   &    6.82e-5  &     3.44   &   8.48e-4   &   2.74   &    5.99e-3    &   2.34\\
4.4e-2 &    1.16e-6  &    3.40   &    1.39e-5  &     2.29   &   1.54e-4   &   2.46   &    1.78e-3    &   1.75\\
2.2e-2 &    1.09e-7  &    3.41   &    2.08e-6  &     2.74   &   2.71e-5   &   2.51   &    5.06e-4    &   1.81\\
1.1e-2 &    9.16e-9  &    3.57   &    2.39e-7  &     3.12   &   4.71e-6   &   2.52   &    1.36e-4    &   1.90\\ \hline
\end{tabular}\vskip2mm
\caption{Errors and orders of convergence of method \eqref{fem} for \ref{itm:P2}, on structured meshes.}
\end{table}

\begin{table}[H]
\centering
\begin{tabular}{|c|cc|cc|cc|cc|}\hline
$h$ & $\|e_h\|_{L^2}$ & $\mbox{r}$& $\|e_h\|_{L^\infty}$ & $\mbox{r}$ & $\|\nabla e_h\|_{L^2}$ & $\mbox{r}$& $\|\nabla e_h\|_{L^\infty}$ & $\mbox{r}$\\ \hline
1.8e-1   &  8.87e-5   &         &     3.97e-4   &          &    3.80e-3   &           &   2.53e-2   & \\
9.0e-2   &  9.73e-6   &   3.29  &     7.46e-5   &    2.49  &    9.04e-4   &   2.14    &   7.43e-3   &    1.82\\
4.7e-2   &  1.11e-6   &   3.33  &     1.06e-5   &    3.00  &    2.15e-4   &   2.21    &   2.58e-3   &    1.63\\
2.4e-2   &  1.30e-7   &   3.15  &     1.42e-6   &    2.95  &    5.06e-5   &   2.13    &   7.34e-4   &    1.84\\
1.2e-2   &  1.59e-8   &   3.14  &     2.24e-7   &    2.76  &    1.27e-5   &   2.07    &   2.16e-4   &    1.83\\
6.1e-3   &  1.96e-9   &   3.04  &     3.15e-8   &    2.85  &    3.15e-6   &   2.02    &   5.55e-5   &    1.98 \\\hline
\end{tabular}\vskip2mm
\caption{Errors and orders of convergence of method \eqref{fem} for \ref{itm:P2}, on non-structured meshes.}
\end{table}
\begin{figure}[H]
\includegraphics[scale=.45]{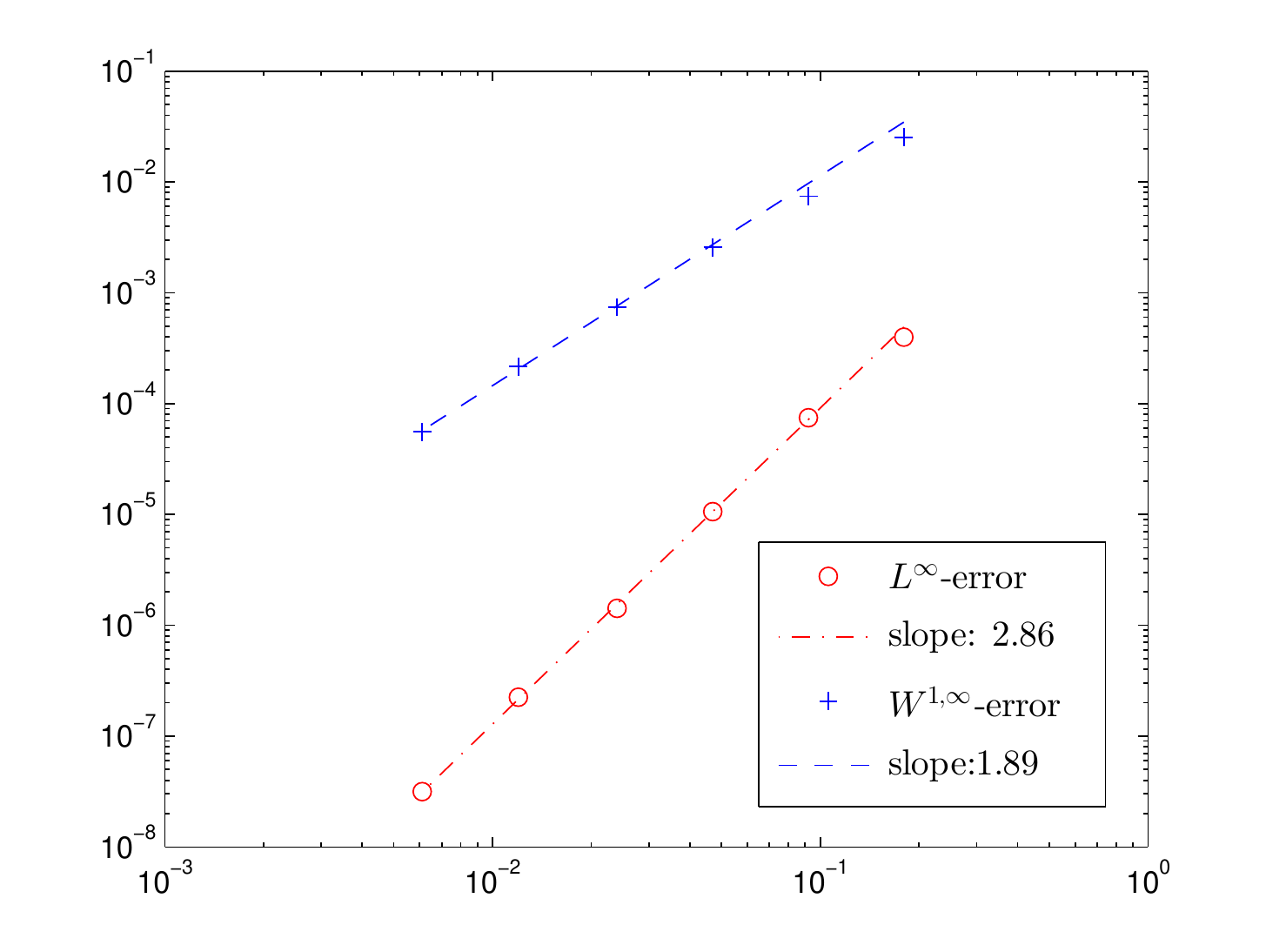} \,\,\,\includegraphics[scale=.35]{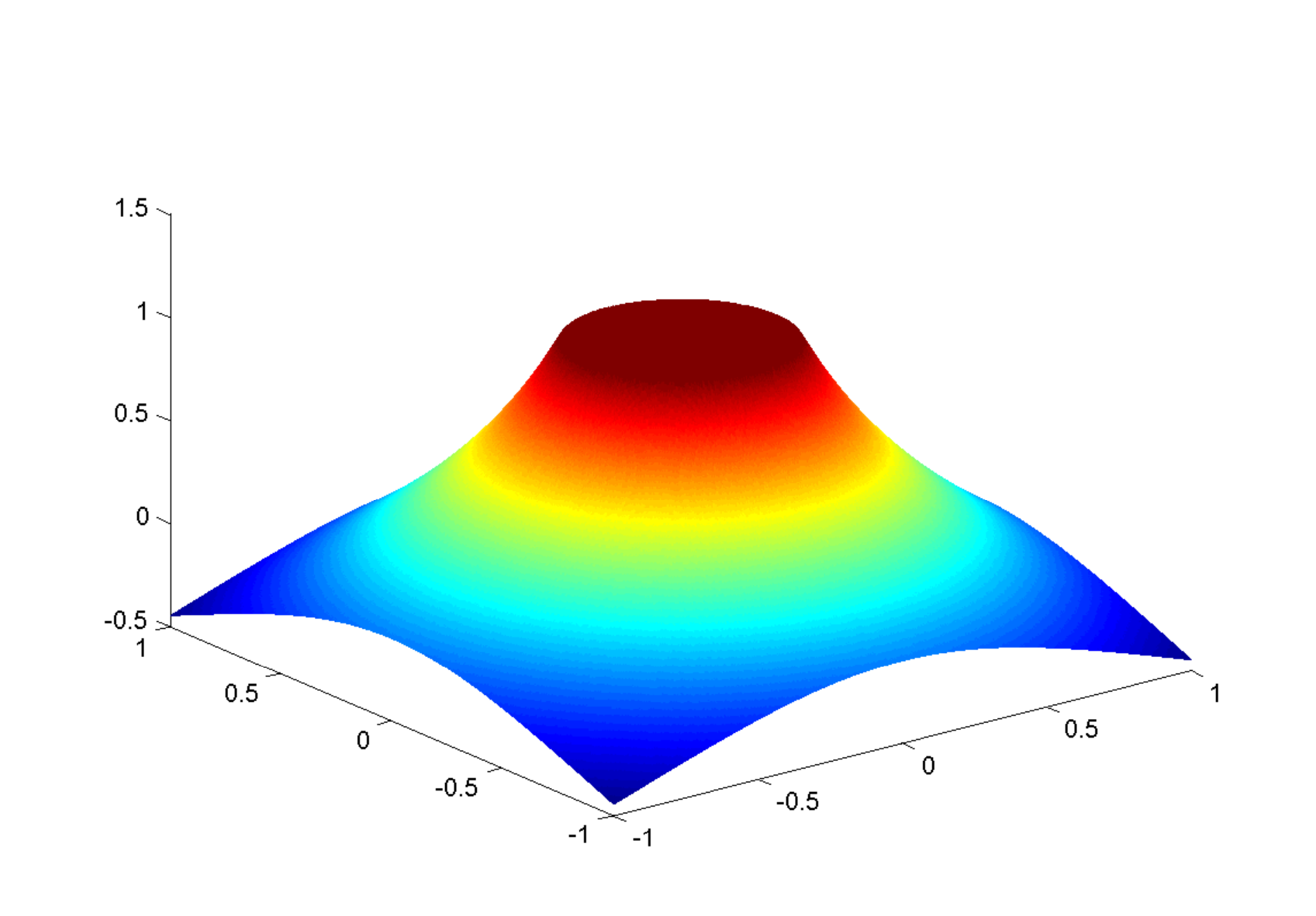} \\
\caption{Plot of the $L^\infty$ and $W^{1,\infty}$ errors (left) and the approximate solution (right) by method \eqref{fem} for \ref{itm:P2}, on non-structured meshes.}
\end{figure}
In this example we are considering a curved interface, then quadrature rules over elements with one curved edge are needed. In Appendix \ref{quadrature} we address this issue giving higher-order quadrature rules over this kind of elements. We observe optimal convergence, confirming our results in Theorems \ref{maxgraderrorthm} and \ref{maxerrorthm}. As in the previous example, superconvergence phenomenon is observed in the case of structured meshes.
\end{enumerate}
\subsection{Numerical examples for Stokes interface problem \eqref{StokesProblem}.}
In order to illustrate our method for the Stokes interface problem we consider the pair of inf-sup stable finite element spaces $\n{V}_h$ and $M_h$, piecewise quadratic polynomial for the velocity and piecewise constant polynomials for the pressure ($\mathbb{P}_2^2 - \mathbb{P}_0$). We define the errors
\begin{equation*}
e^{\n{u}}_h := \n{u}_h - I_h \n{u},\qquad e^p_h:= p_h-J_h p,
\end{equation*}

With this finite element spaces, the value of $k$ is 1 (see assumption {\bf A2}) , then $I_h$ is the piecewise linear Lagrange interpolant and $J_h$ is the $L^2$ projection on the spaces of piecewise constant polynomials. We note that the optimal order of convergence for velocity is $2$, this is the reason we are using the piecewise linear Lagrange interpolant although we are using piecewise quadratic in our finite element space.
\begin{enumerate}[label=\bfseries Ex. \theenumi]

\item\label{itm:S1} Consider an exact solution of Stokes problem \eqref{StokesProblem} on $\Omega =[-1,1]$
\begin{align*}
\n{u}(x,y) &= \left(
          \begin{array}{c}
            0 \\
            u_2(x,y) \\
          \end{array}
        \right),\quad u_2(x,y) = \left\{
                            \begin{array}{ll}
                              2/3+x, & \hbox{if } x\leq 1/3,\\
                              4/3-x, & \hbox{if } x>1/3
                            \end{array} \right. ,\\ p(x,y) & = \left\{
                            \begin{array}{ll}
                              x^2+y^2+1/3, & \hbox{if } x\leq 1/3,\\
                              x^2+y^2-8/3, & \hbox{if } x>1/3
                            \end{array}
                          \right. \quad (x,y)\in \Omega.
\end{align*}
In this case the interface is $\Gamma = \{(x,y)\in \Omega : x =1/3\}$.
\begin{table}[H]
\centering
\begin{tabular}{|c|cc|cc|cc|cc|}\hline
$h$ & $\|e^{\n{u}}_h\|_{L^2}$ & $\mbox{r}$& $\|e^{\n{u}}_h\|_{L^\infty}$ & $\mbox{r}$ & $\|\nabla e^{\n{u}}_h\|_{L^2}$ & $\mbox{r}$& $\|\nabla e^{\n{u}}_h\|_{L^\infty}$ & $\mbox{r}$\\ \hline
3.5e-1  &   1.27e-2  &            &    1.22e-2  &           &     1.55e-1 &          &       1.92e-1&\\
1.8e-1  &   3.42e-3  &    1.89    &   3.42e-3   &    1.83   &   7.86e-2   &   0.98   &    1.08e-1    &   0.83\\
8.8e-2  &   8.84e-4  &    1.95    &   9.67e-4   &    1.82   &   3.93e-2   &   1.00   &    5.71e-2    &   0.92\\
4.4e-2  &   2.25e-4  &    1.98    &   2.60e-4   &    1.89   &   1.96e-2   &   1.00   &    3.06e-2    &   0.90\\
2.2e-2  &   5.66e-5  &    1.99    &   6.84e-5   &    1.93   &   9.79e-3   &   1.00   &    1.58e-2    &   0.95\\ \hline
\end{tabular}\vskip2mm
\caption{Errors and orders of convergence for velocity, on structured meshes.}
\end{table}
\begin{table}[H]
\centering
\begin{tabular}{|c|cc|cc|}\hline
$h$ & $\|e^p_h\|_{L^2}$ & $\mbox{r}$& $\|e^p_h\|_{L^\infty}$ & $\mbox{r}$ \\ \hline
3.5e-1   &  6.01e-2  &             &   9.30e-2  & \\
1.8e-1   &  2.22e-2  &    1.44     &  6.07e-2   &    0.62\\
8.8e-2   &  7.45e-3  &    1.58     &  3.52e-2   &    0.79\\
4.4e-2   &  2.34e-3  &    1.67     &  1.92e-2   &    0.88\\
2.2e-2   &  7.09e-4  &    1.73     &  1.00e-2   &    0.93  \\\hline
\end{tabular}\vskip2mm
\caption{Errors and orders of convergence for the pressure, on structured meshes.}
\end{table}
We observe optimal convergence for the velocity and pressure, supporting our result in Theorem \ref{thmStokes}. We also observe a superconvergence phenomenon for the $L^2$ error of the pressure.
\vskip2mm
\item  Consider a exact solution of Stokes problem \eqref{StokesProblem} on $\Omega =[-1,1]$

\begin{align*}
\n{u}(x,y) &= \left(
          \begin{array}{c}
            u_1(x,y) = \left\{
                            \begin{array}{ll}
                              3y & \hbox{if } r\leq 1/3,\\
                             \frac{4y}{3r}-y & \hbox{if } r>1/3
                            \end{array} \right.  \\
            u_2(x,y) =\left\{
\begin{array}{ll}
-3, & \hbox{if } r\leq 1/3,\\
x-\frac{4x}{3r} & \hbox{if } r>1/3
\end{array}\right.\\
          \end{array}
        \right),\\
\quad p(x,y) & = \left\{
\begin{array}{ll}
4-\frac{\pi}{9} & \hbox{if } r\leq 1/3,\\
\frac{\pi}{9} & \hbox{if } r>1/3
\end{array}
\right. ,
\end{align*}
for  $(x,y)\in \Omega$ and $r = \sqrt{x^2+y^2}$.
In this case the interface is the circumference of radius $3$, i.e. $\Gamma = \{(x,y)\in \Omega : r =\frac{1}{3}\}$.

\begin{table}[H]
\centering
\begin{tabular}{|c|cc|cc|cc|cc|}\hline
$h$ & $\|e^{\n{u}}_h\|_{L^2}$ & $\mbox{r}$& $\|e^{\n{u}}_h\|_{L^\infty}$ & $\mbox{r}$ & $\|\nabla e^{\n{u}}_h\|_{L^2}$ & $\mbox{r}$& $\|\nabla e^{\n{u}}_h\|_{L^\infty}$ & $\mbox{r}$\\ \hline
2.5e-01 &    3.02e-2  &          &      3.99e-2  &           &     5.16e-1 &          &       8.10e-1 & \\
1.3e-01 &    8.48e-3  &    1.83  &     1.79e-2   &    1.16   &   2.79e-1   &   0.89   &    5.48e-1    &   0.56\\
6.3e-02 &    2.03e-3  &    2.06  &     5.35e-3   &    1.74   &   1.36e-1   &   1.03   &    3.35e-1    &   0.71\\
3.1e-02 &    5.09e-4  &    2.00  &     1.68e-3   &    1.67   &   6.84e-2   &   0.99   &    2.06e-1    &   0.70\\
1.6e-02 &    1.26e-4  &    2.02  &     4.22e-4   &    1.99   &   3.36e-2   &   1.03   &    1.05e-1    &   0.97\\ \hline
\end{tabular}\vskip2mm
\caption{Errors and orders of convergence for velocity, on structured meshes.}
\end{table}
\begin{table}[H]
\centering
\begin{tabular}{|c|cc|cc|}\hline
$h$ & $\|e^p_h\|_{L^2}$ & $\mbox{r}$& $\|e^p_h\|_{L^\infty}$ & $\mbox{r}$ \\ \hline
2.5e-1  &   1.39e-1  &           &     1.84e-1   & \\
1.3e-1  &   3.39e-2  &    2.04   &    7.71e-2    &   1.26 \\
6.3e-2  &   1.32e-2  &    1.36   &    4.29e-2    &   0.85 \\
3.1e-2  &   3.79e-3  &    1.80   &    2.36e-2    &   0.86 \\
1.6e-2  &   1.46e-3  &    1.38   &    1.27e-2    &   0.90    \\\hline
\end{tabular}\vskip2mm
\caption{Errors and orders of convergence for pressure, on structured meshes.}
\end{table}
\begin{figure}[H]\label{Stokesfigure}
\includegraphics[scale=.3]{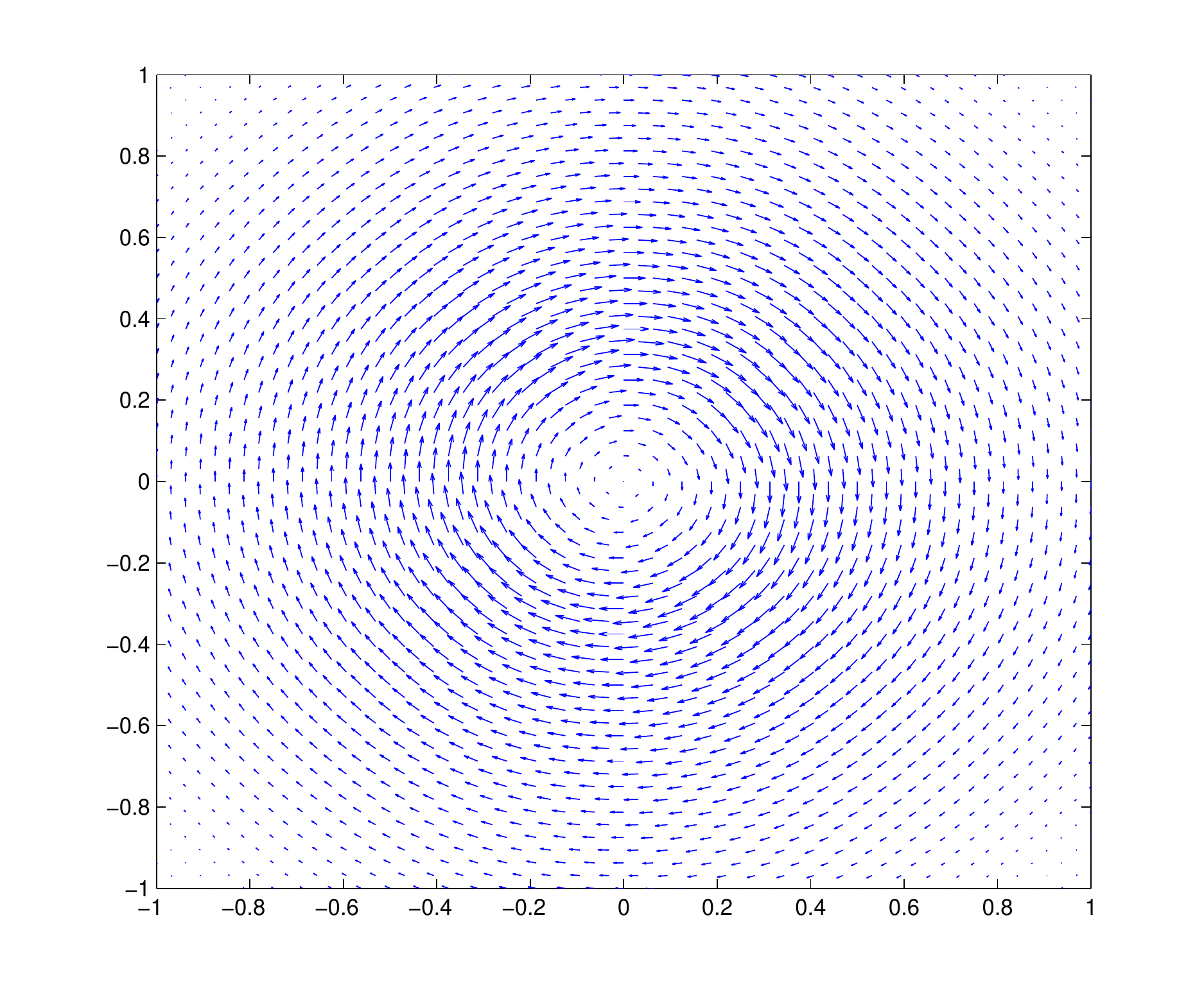}\includegraphics[scale=.425]{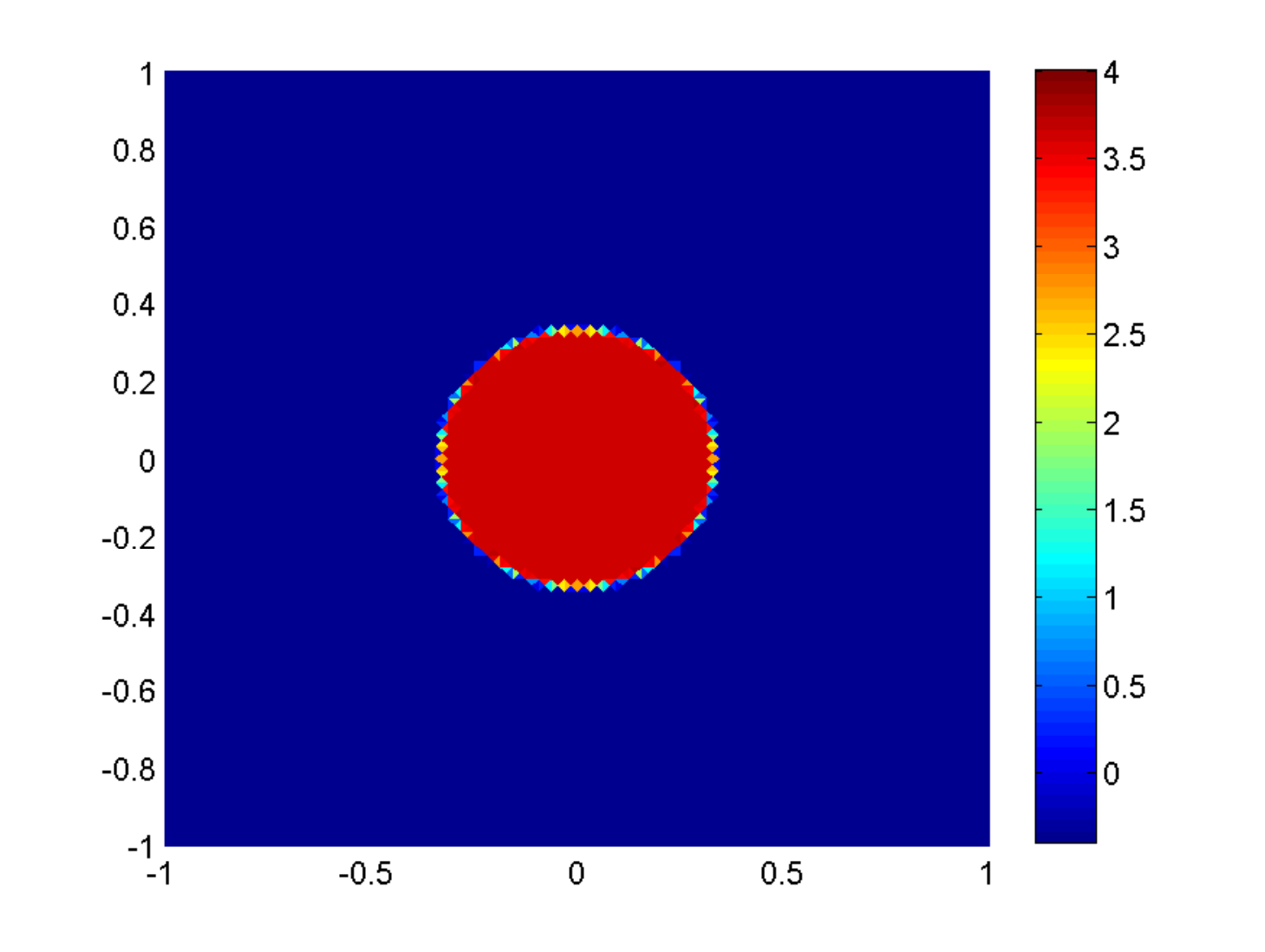}\vskip2mm
\caption{Plot of velocity (left) and discontinuous pressure (right).}
\end{figure}
We observe optimal convergence for the velocity and pressure, confirming our result in Theorem \ref{thmStokes}. We also observe from the plot of the approximate pressure (see Figure \ref{Stokesfigure}) that there is not pollution beyond the interface.

\end{enumerate}

%----------------------------------------------------------------------------------------------------------------------------------------------------------------------------------------------
\section{Final discussion and future work}
%----------------------------------------------------------------------------------------------------------------------------------------------------------------------------------------------
In this paper we have developed higher-order finite element methods for Poisson interface problem \eqref{Problem} and Stokes interface problem \eqref{StokesProblem}. We have proved optimal convergence results for the methods, recovering the convergence of the finite element method for a problem without interface. We have presented numerical experiments validating our theoretical results. The stiffness matrix remains the same as the problem without the interface. To the best of our knowledge, this is the first family of numerical methods that have provable higher-order accuracy (or arbitrary order) for these problems.  

The algorithms and theory developed point to a number of promising research directions and
opportunities. In the future, we plan to use our techniques to develop and analyze  high contrast
problems for elliptic problems with discontinuous diffusion coefficients. Additionally, it is natural
to move in the direction of evolution problems, such as the time-dependent version of
problem \eqref{StokesProblem} with moving interfaces. We plan to develop time stepping methods where in each time step we used the methodology developed here to solve the resulting problem.  Navier-Stokes equations and more challenging problems such as fluid-structure interactions will be explored in the future.

Finally, more straightforward generalizations are possible. In this paper, for convenience, we used triangular meshes, but it is clear that rectangular or quadrilateral meshes can be used. Indeed, the correction function did not use the fact that the mesh is triangular. Three dimensional problems should not pose a serious difficulty. The key to our method is the correction functions $w_T^u$. The construction of corresponding correction functions in three dimensions seems natural.  Details will appear in the Ph.D. thesis of the second author.

\vskip4mm
%----------------------------------------------------------------------------------------------------------------------------------------------------------------------------------------------
\appendix
%----------------------------------------------------------------------------------------------------------------------------------------------------------------------------------------------
\section{Proof of estimate \eqref{Dngbound}}\label{AppendixDngbound}

We first use a dyadic decomposition of $\Omega$. $\Omega = \Omega^* \cup \bigcup_{j=1}^J \Omega_j\,\,$ where $\,\,\Omega_j  := \{ x \in \Omega: d_{j+1} \le |x-z| \le  d_{j} \}$, and $ d_j=2^{-j}$. Here, $J=\log_2(1/h)$  and $\Omega^*= \Omega \cap B_h$ where $B_h$ is the ball of radius $h$ centered $z$.

Let us use the notation $D_j=  \Omega_j \cap \partial R_h \backslash \partial \Omega$ and $D^*=\Omega^* \cap \partial R_h \backslash \partial \Omega$ then we have
\begin{equation*}
\| D_{\n{n}}  g\|_{L^1(\partial R_h \backslash \partial \Omega)} =\| D_{\n{n}}  g\|_{L^1(D^*)}+ \sum_{j=1}^J \|D_{\n{n}}  g\|_{L^1(D_j)}
\end{equation*}

First we bound the first term. Using Cauchy-Schwarz inequality and the fact the one dimensional  measure of $D^*$ is $h$ we have
\begin{equation*}
\| D_{\n{n}}  g\|_{L^1(D^*)} \le C h^{1/2} \| \nabla g\|_{L^2(D^*)}.
\end{equation*}
If we use a trace inequality then we have

\begin{equation*}
h^{1/2} \| \nabla g\|_{L^2(D^*)} \le C ( \| \nabla g\|_{L^2(\Omega^* \cap B_h)}+ h \|D^2 g\|_{L^2(\Omega^* \cap B_h)}) \le C (\| \nabla g\|_{L^2(\Omega)}+ h \|D^2 g\|_{L^2(\Omega)}).
\end{equation*}
Elliptic regularity will give
\begin{equation*}
\| \nabla g\|_{L^2(\Omega)}+ h \|D^2 g\|_{L^2(\Omega)} \le C( \|\delta_h\|_{L^2(\Omega)} + h \|\delta_h\|_{H^1(\Omega)}) \le \frac{C}{h},
\end{equation*}
which shows that
\begin{equation*}
\| D_{\n{n}}  g\|_{L^1(D^*)} \le \frac{C}{h}.
\end{equation*}

To take care of the sum we use Cauchy-Schwarz inequality again and get
\begin{equation*}
\sum_{j=1}^J \| D_{\n{n}}  g\|_{L^1(D_j)} \le \sum_{j=1}^J d_j \| \nabla g\|_{L^\infty(D_j)}
\end{equation*}

Using the Green's function representation of $g$ and the fact that $D_j$ is distance $d_j$ away from $z$ one can show (see \cite{GSS2014})
\begin{equation*}
\|\nabla g\|_{L^\infty(D_j)} \le \frac{C}{d_j}.
\end{equation*}

Hence,
\begin{equation*}
\sum_{j=1}^J \| D_{\n{n}}  g\|_{L^1(D_j)} \le C \sum_{j=1}^J \frac{1}{d_j}=C  \sum_{j=1}^J \frac{1}{d_j}= C (2^{J+1}-1)\le \frac{C}{h}.
\end{equation*}

\section{Derivation of jumps \eqref{JumpsStokes}}\label{Ajumps}

We define for $\epsilon>0$ the set $\Omega_\epsilon = \{x\in \Omega:\quad dist(x,\Gamma) < \epsilon\}$, where $dist $ is the standard distance function. We will also denote $\Gamma_\epsilon^\pm = \partial \Omega_\epsilon\cap \Omega^\pm$. From equation \eqref{StokesProblem2:a}, taking divergence, multiplying by an arbitrary $\phi\in C^2$ of compact support in $\Omega_\epsilon$  and integrating we obtain
\begin{align}\label{deltap}
\nonumber \int_{\Omega_\epsilon} \nabla\cdot \nabla p\, \phi dx &= \int_{\Omega_{\epsilon}}\nabla\cdot f\phi dx + \int_{\Omega_{\epsilon}} \nabla \cdot \n{B}\phi dx \\
                                           &= \int_{\Omega_{\epsilon}}\nabla\cdot f \phi dx - \int_{\Gamma} \n{\beta}\cdot \nabla \phi ds.
\end{align}

For the left-hand side of the equation above we integrate by parts twice obtaining
\begin{align*}
\int_{\Omega_\epsilon} \nabla\cdot \nabla p \,\phi dx &= \int_{\Gamma_\epsilon^+} D p^+\cdot n \phi ds + \int_{\Gamma_\epsilon^-} D p^-\cdot n \phi ds \\
                                                    & -\int_{\Gamma_\epsilon^+} D \phi\cdot n p^+ ds -\int_{\Gamma_\epsilon^-} D \phi\cdot n p^- ds + \int_{\Omega_\epsilon} p  \nabla\cdot \nabla \phi dx.
\end{align*}
By the definitions of $\Gamma_\epsilon^\pm $ we can fix its normal vectors by the normal vector to the interface $\Gamma$.
Then, taking the limit as $\epsilon$ goes to zero we obtain
\begin{align}\label{limitp}
\lim_{\epsilon\rightarrow 0} \int_{\Omega_\epsilon} p  \nabla\cdot \nabla \phi dx& = 0 \qquad  \\
\lim_{\epsilon\rightarrow 0}\int_{\Omega_\epsilon} \nabla\cdot \nabla p \phi dx & \quad = \quad  \int_{\Gamma} \left[D p\cdot n\right] \phi ds -\int_{\Gamma} D \phi\cdot n \left[ p \right] ds
\end{align}

We need to write the right-hand side in \eqref{deltap} in terms of the normal and tangential derivative. Let $\theta$ be the angle between the $x$ direction ($x$-axis) and $n$ direction and let $R(\theta)$ the rotation matrix defined by
\begin{equation*}
R(\theta) = \left(
  \begin{array}{cc}
    \cos(\theta) & -\sin(\theta) \\
    \sin(\theta) & \cos(\theta) \\
  \end{array}
\right)
\end{equation*}

Then, $\nabla \phi = (D_{\n{n}}\phi,D_{\n{t}}\phi) R(\theta)^t $. Using this equivalency in \eqref{deltap} and integration by parts we obtain
\begin{align*}
\int_{\Gamma} \n{\beta}\cdot \nabla \phi ds & = \int_{\Gamma} \n{\beta} \cdot \left((D_{\n{n}}\phi,D_{\n{t}}\phi) R(\theta)^t\right) ds \\
                                            & = \int_{\Gamma} \left( \n{\beta} R(\theta)\right)\cdot  (D_{\n{n}}\phi,D_{\n{t}}\phi)  ds \\
                                            & = \int_{\Gamma} \hat{\n{\beta}} \cdot (D_{\n{n}}\phi,D_{\n{t}}\phi)   ds\\
                                            & = \int_{\Gamma} \hat{\beta}_1 D_{\n{n}}\phi  ds -  \int_{\Gamma} \frac{\partial}{\partial s}\hat{\beta}_2 \phi   ds
\end{align*}
Here we have defined $\n{\hat{\beta}} = \n{\beta} R(\theta)$. Matching this result with the limits in \eqref{limitp} we can conclude that
\begin{align}
\left[p\right] \, = \,\hat{\beta}_1 &\qquad
\left[D_{\n{n}} p \right] \, = \, \frac{\partial}{\partial s}\hat{\beta}_2.
\end{align}

Now, to get the normal jump of the gradient of the velocity we applied equation \eqref{StokesProblem:d}, resulting
\begin{equation}
\left[D_{\n{n}} \n{u} \right] = \n{\beta} + \left[p \n{n}\right] =  \n{\beta} +  \hat{\beta}_1 \n{n}.
\end{equation}

\section{Quadrature over curved elements.}\label{quadrature}

The method defined in \eqref{fem} requires the integration, over an element $T\in \mathcal{T}_h^\Gamma$, of $\nabla w_T^u \cdot \nabla v$. Now, since the correction function $w_T^u$ is defined as piecewise polynomial of degree $k$ on each piece of the restriction $T^\pm = T\cap \Omega^\pm$, we need some quadrature formulas on this region, say curved elements (polygons with one edge curved). To do so,
% denote $T_c \subset
%T$ as the region enclosed by curve $\Gamma \cap T$ and the straight segment $L_T$,
%after connecting the intersection points $y_T$ and $z_T$.  Additionally,
we observe that $L_T$ divides $T$ in two regular polygons $\tilde{T}^\pm$, one is a triangle and the other a quadrilateral, see
figure \ref{fig:quad}. In order to make
the presentation clearer, we will write the integral in terms of the coordinates system $(\tau,\,\eta)$ (associated to the vector $\n{\tau}$ and $\n{\eta}$, with $\n{\eta}$ pointing outward $T^-$) such that the origin is the midpoint of $L_T$, the point $\frac{y_T+z_T}{2}$. Let $f\in C^{n+1}(T)$,  then we write

\begin{equation} \label{integral}
\int_{T^{\pm}} f(x,y) d{x} d{y} \,\,=\,\, \int_{\tilde{T}^{\pm}} f(x,y) d{x} d{y} \pm \int_{-r_T/2}^{r_T/2} \int_{0}^{\gamma(\tau)} f(\tau,\eta) d{\eta} d{\tau}
\end{equation}
where, $\gamma(\tau)$ is the equation for the curve $\Gamma$ on $T$. The first integral in \eqref{integral} is over a polygon (triangle or quadrilateral) and
we can use quadrature rules over polygons to approximate it, whereas the second integral is related to the curve, and we will use some Gaussian quadratures.
First step, we take $n$ Gaussian points $\left\{\tau_i\right\}_{i=1}^n$ over the segment $L_T$, equivalently, the interval $[-r_T/2, r_T/2]$ in the $(\tau,\,\eta)$-axis, and for each point we compute its projection onto the curve $\Gamma$ in the $\n{\eta}-$direction, say $\gamma(\tau_i)$. Now we compute, the Gaussian points for the segment $[0,\gamma(\tau_i)]$, we denote them by $\{\eta_{i,j}\}_{j=1}^n$. Therefore
\begin{align*}
\int_{-r_T/2}^{r_T/2} \int_{0}^{\gamma(\tau)} f(\tau,\eta) d{\tau} d{\eta}\,\,\approx\,\,\sum_{i=1}^n \frac{r_T}{2}\omega_i \sum_{j=1}^n \frac{|\gamma(\tau_i)|}{2} \omega_j f(\tau_i,\eta_{i,j})
\end{align*}
where $\omega_i$ are the weights in the interval $[-1,1]$. See figure \ref{fig:quad} for illustration of this notation. See also figure \ref{fig:Fig1}.
%If $f$ is a polynomial of degree $2(k-1)$, such as $f = \nabla w_T^u \cdot \nabla v$, we should
%choose $n=k$ to quadrature to be exact. If $w_T^u$ is $C^{k-1}$ and
% $v$ is  $C^{k}$, than we can choose $n=k-1$ to be exact.

\begin{figure}[H]
\includegraphics[scale=.4]{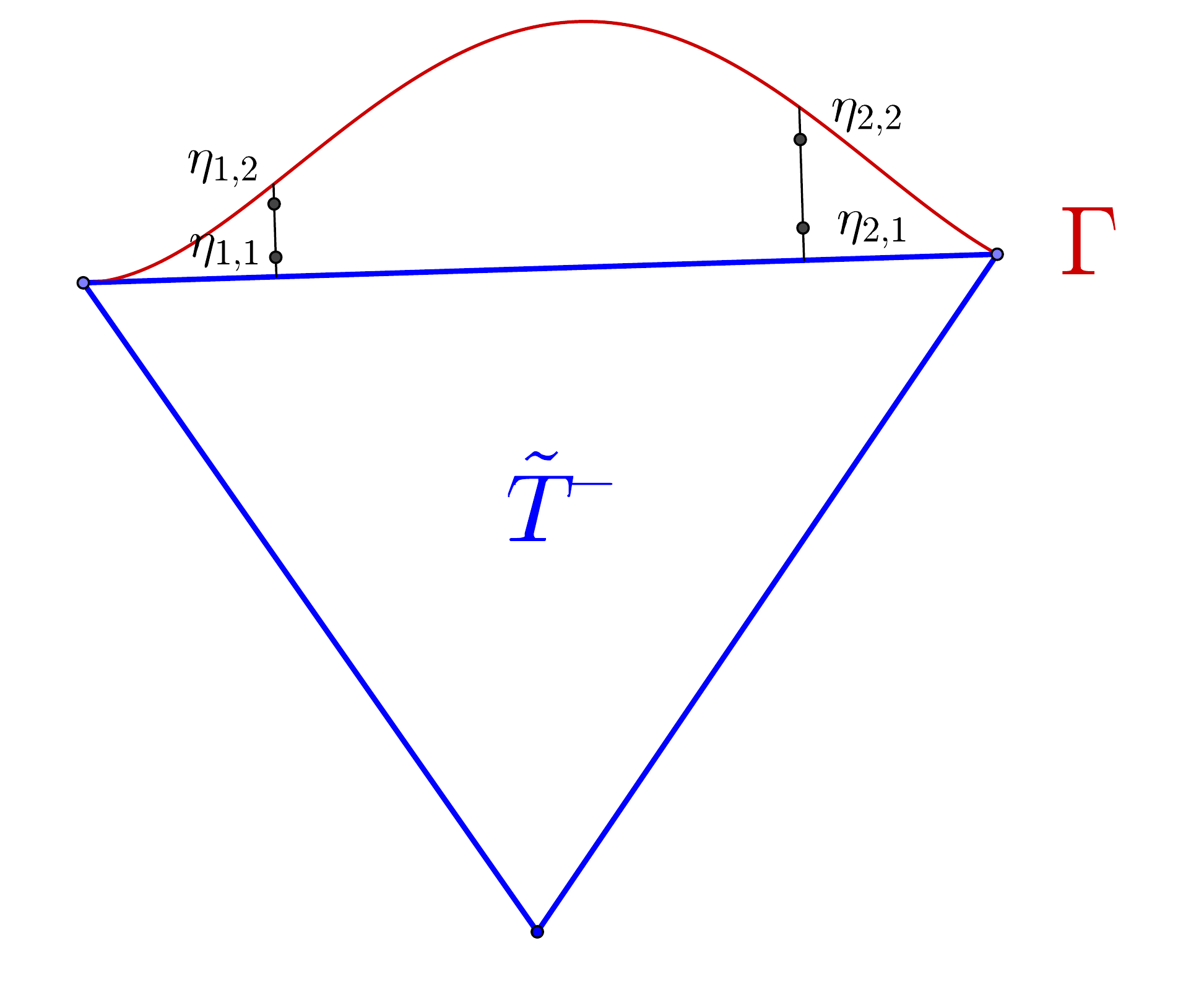}
\caption{Illustration of the Gauss points.}
\label{fig:quad}
\end{figure}

%----------------------------------------------------------------------------------------------------------------------------------------------------------------------------------------------
\bibliography{HigherorderBiblio}{}
\bibliographystyle{plain}
%----------------------------------------------------------------------------------------------------------------------------------------------------------------------------------------------
\end{document}